\newtheorem{theorem}{Theorem}
\newtheorem{cor}[theorem]{Corollary}
\newtheorem{obs}{Observation}
\newtheorem{lemma}{Lemma}
\newtheorem{dnt}{Definition}
\newtheorem{problem}{Problem}
\newcommand{\ag}{\textsc{Autograph}\xspace}
\newcommand{\sig}{\textsc{Signature}\xspace}
\title{We Found the Smallest Non-Autograph}
\author[1]{Ben Baumer\thanks{Clark Science Center, 44 College Lane, Northampton, MA 01063, \texttt{bbaumer@smith.edu}}}
\author[1]{Yijin Wei\thanks{\texttt{ywei@smith.edu}}}
\author[2]{Gary Bloom}
\affil[1]{Department of Mathematics and Statistics, Smith College}
\affil[2]{Department of Computer Science, City College}
\begin{document}
\maketitle

\begin{abstract}
Suppose that $G$ is a simple, vertex-labeled graph and that $S$ is a multiset. Then if there exists a one-to-one mapping between the elements of $S$ and the vertices of $G$, such that edges in $G$ exist if and only if the absolute difference of the corresponding vertex labels exist in $S$, then $G$ is an \emph{autograph}, and $S$ is a \emph{signature} for $G$. While it is known that many common families are graphs are autographs, and that infinitely many graphs are not autographs, a non-autograph has never been exhibited. In this paper, we identify the smallest non-autograph: a graph with 6 vertices and 11 edges. Furthermore, we demonstrate that the infinite family of graphs on $n$ vertices consisting of the complement of two non-intersecting cycles contains only non-autographs for $n \geq 8$. 
\\\\
\textbf{graph labeling, difference graphs, autographs, monographs}\\
\textbf{MSC Primary 05C78; Secondary 05C60;}
\end{abstract}

\section{Introduction}

Let $G = (V,E)$ be a simple, vertex-labeled graph, and let $S$ be a multiset. If there exists a bijection $\pi : S \rightarrow V$ such that for every $s,t \in S$, the edge $(\pi(s), \pi(t)) \in E$ if and only if $|s-t| \in S$, then we say that $G$ is an \emph{autograph} and that $S$ is a \emph{signature} for $G$. For example, consider the path graph on three vertices $P_3$ shown in Figure \ref{fig:path}. This is an autograph, since one possible signature for this graph is $S = \{1,2,4\}$.

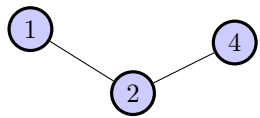
\begin{figure}[h]
	\centering
	\tikzset{
	node/.style={circle,inner sep=1mm,minimum size=0.2cm,draw,very thick,black,fill=blue!20,text=black},
	nondirectional/.style={thin,black},
}

\begin{tikzpicture}[scale=5]
	\node [node] (v0) at (-0.375458, -0.469825)	{1};
	\node [node] (v1) at (-0.102779, -0.640131)	{2};
	\node [node] (v2) at (0.167645, -0.505400)	{4};
	\path [nondirectional] (v0) edge (v1);
	\path [nondirectional] (v1) edge (v2);
\end{tikzpicture}
	\caption{The path graph $P_3$ on three vertices. This is an autograph realized by the signature $\{1,2,4\}$. The edge between the vertices labeled 1 and 2 exists because $|1-2| = 1 \in S$, and similarly for the vertices labeled 2 and 4. However, the edge between 1 and 4 does not exist because $|1-4| = 3 \notin S$. }
	\label{fig:path}
\end{figure}

The use of a signature to define and categorize graphs is useful because it provides an exceptionally compact data structure for graphs -- an autograph can be represented using only $n$ numbers, whereas a non-sparse graph requires $O(n^2)$. The study of autographs -- or undirected \emph{difference graphs} -- has led to the knowledge that many families of graphs are autographs, and indeed, signatures for a large number of common graph families have been discovered. For example, the set $S = \{1, 2, \ldots, n\}$ provides a signature for the complete graph $K_n$ on $n$ vertices. However, it is also known that infinitely many graphs on $n$ vertices are not autographs, and further, it is conjectured that nearly all graphs are not autographs. Yet while \cite{bloom1984acg} proved that infinitely-many non-autographs exist, none has ever been demonstrated. In this paper, we prove that the graph $M_6$ on 6 vertices shown in Figure \ref{fig:m6} is the smallest non-autograph. Furthermore, the infinite family of graphs $G_n = K_n \setminus 2 C_{n/2}$, $n$ even, consisting of a complete graph with two non-intersecting cycles deleted, contains only non-autographs for $n \geq 8$. An illustration of $G_{10}$ is shown in Figure \ref{fig:G}.

\begin{figure}[ht]
	\centering
	\subfigure[$M_6$]{
		\tikzset{
	node/.style={circle,inner sep=1mm,minimum size=0.2cm,draw,very thick,black,fill=blue!20,text=black},
	nondirectional/.style={thin,black},
}

\begin{tikzpicture}[yscale=1]
	\node [node] (v0) at (0, 1)	{?};
	\node [node] (v1) at (1, 0.5)	{?};
	\node [node] (v2) at (1, -0.5)	{?};
	\node [node] (v3) at (0, -1)	{?};
	\node [node] (v4) at (-1, -0.5)	{?};
	\node [node] (v5) at (-1, 0.5)	{?};
	\path [nondirectional] (v0) edge (v1);
	\path [nondirectional] (v0) edge (v5);
	\path [nondirectional] (v0) edge (v3);
	\path [nondirectional] (v3) edge (v4);
	\path [nondirectional] (v3) edge (v2);
	\path [nondirectional] (v1) edge (v2);
	\path [nondirectional] (v1) edge (v4);
	\path [nondirectional] (v1) edge (v5);
	\path [nondirectional] (v2) edge (v4);
	\path [nondirectional] (v2) edge (v5);
	\path [nondirectional] (v4) edge (v5);
\end{tikzpicture}
	} 
	\qquad		
	\subfigure[$\overline{M_6}$]{
		\tikzset{
	node/.style={circle,inner sep=1mm,minimum size=0.2cm,draw,very thick,black,fill=blue!20,text=black},
	nondirectional/.style={thin,black},
}

\begin{tikzpicture}[yscale=1]
	\node [node] (v0) at (0, 1)	{?};
	\node [node] (v1) at (1, 0.5)	{?};
	\node [node] (v2) at (1, -0.5)	{?};
	\node [node] (v3) at (0, -1)	{?};
	\node [node] (v4) at (-1, -0.5)	{?};
	\node [node] (v5) at (-1, 0.5)	{?};
	\path [nondirectional] (v0) edge (v2);
	\path [nondirectional] (v0) edge (v4);
	\path [nondirectional] (v3) edge (v1);
	\path [nondirectional] (v3) edge (v5);
\end{tikzpicture}
	}
	\caption{The smallest non-autograph, $M_6$ (left), and its complement (right).}
	\label{fig:m6}
\end{figure}
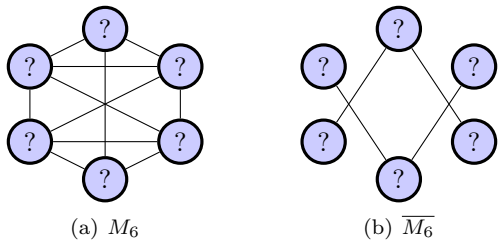

\begin{figure}
		\centering
		\subfigure[$G_{10} = \overline{2C_5}$]{
			\begin{xy}
	(20, 0)*+{\bullet}="a";
	(16.18, 11.76)*+{\bullet}="b";
	(6.18, 19.02)*+{\bullet}="c";
	(-6.18, 19.02)*+{\bullet}="d";
	(-16.18, 11.76)*+{\bullet}="e";
	(-20, 0)*+{\bullet}="f";
	(-16.18, -11.76)*+{\bullet}="g";
	(-6.18, -19.02)*+{\bullet}="h";
	(6.18, -19.02)*+{\bullet}="i";
	(16.18, -11.76)*+{\bullet}="j";
		{\ar@{-} "a"; "b"};
		{\ar@{-} "a"; "d"};
		{\ar@{-} "a"; "e"};
		{\ar@{-} "a"; "f"};
		{\ar@{-} "a"; "g"};
		{\ar@{-} "a"; "h"};
		{\ar@{-} "a"; "j"};
		{\ar@{-} "b"; "c"};
		{\ar@{-} "b"; "e"};
		{\ar@{-} "b"; "f"};
		{\ar@{-} "b"; "g"};
		{\ar@{-} "b"; "h"};
		{\ar@{-} "b"; "i"};
		{\ar@{-} "c"; "d"};
		{\ar@{-} "c"; "f"};
		{\ar@{-} "c"; "g"};
		{\ar@{-} "c"; "h"};
		{\ar@{-} "c"; "i"};
		{\ar@{-} "c"; "j"};
		{\ar@{-} "d"; "e"};
		{\ar@{-} "d"; "g"};
		{\ar@{-} "d"; "h"};
		{\ar@{-} "d"; "i"};
		{\ar@{-} "d"; "j"};
		{\ar@{-} "e"; "f"};
		{\ar@{-} "e"; "h"};
		{\ar@{-} "e"; "i"};
		{\ar@{-} "e"; "j"};
		{\ar@{-} "f"; "g"};
		{\ar@{-} "f"; "i"};
		{\ar@{-} "f"; "j"};
		{\ar@{-} "g"; "h"};
		{\ar@{-} "g"; "j"};
		{\ar@{-} "h"; "i"};
		{\ar@{-} "i"; "j"};
	\end{xy}
		} 
		\qquad		
		\subfigure[$2C_5$]{
			\begin{xy}
		(20, 0)*+{\bullet}="a";
	(16.18, 11.76)*+{\bullet}="b";
	(6.18, 19.02)*+{\bullet}="c";
	(-6.18, 19.02)*+{\bullet}="d";
	(-16.18, 11.76)*+{\bullet}="e";
	(-20, 0)*+{\bullet}="f";
	(-16.18, -11.76)*+{\bullet}="g";
	(-6.18, -19.02)*+{\bullet}="h";
	(6.18, -19.02)*+{\bullet}="i";
	(16.18, -11.76)*+{\bullet}="j";
		{\ar@{-} "a"; "c"};
		{\ar@{-} "c"; "e"};
		{\ar@{-} "e"; "g"};
		{\ar@{-} "g"; "i"};
		{\ar@{-} "i"; "a"};
		{\ar@{-} "b"; "d"};
		{\ar@{-} "d"; "f"};
		{\ar@{-} "f"; "h"};
		{\ar@{-} "h"; "j"};
		{\ar@{-} "j"; "b"};
	\end{xy}
	}
	\caption{Illustration of $G_{10} = \overline{2C_5}$ and $2C_5$. We show that this graph is not an autograph.}
	\label{fig:G}
\end{figure}
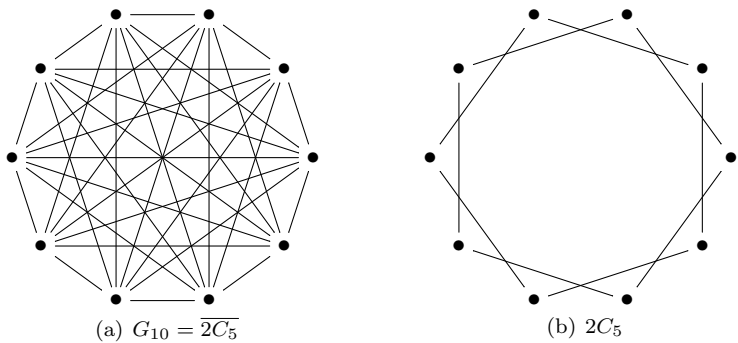

\subsection{Motivation}

The bijective nature of autographs becomes useful in a variety of practical contexts. The primary advantage is that since the adjacency matrix for an autograph can be computed from its signature, an autograph can be stored using $O(n)$ space, compared to $O(n^2)$ for a non-sparse graph. For large graphs, this order of magnitude savings can be of considerable practical value. For example, these savings could improve reliability if such information had to be transmitted over a noisy channel. Given its brevity, the representation of an autograph by its signature provides some inherent increase in the probability of a successful transmission over a network.

Two problems are of interest:

\begin{problem}
[\ag] Given a simple graph $G$, determine if $G$ is an autograph.
\end{problem}

\begin{problem}
[\sig] Given an autograph $G$, find a signature $S$ for $G$.
\end{problem}

In Section~\ref{sec:conclusion}, we discuss some computational complexity concerns of these two problems. 

\subsection{Related Work}

Within graph labeling, there is substantial interest in \emph{graceful labelings}, dating back to the work of~\cite{golomb1972ng} and~\cite{rosa1966certain}. A graph $G=(V,E)$ is considered graceful if the vertices can be numbered with integers chosen from $[0,|E|]$ such that every edge receives a distinct integer label defined by the absolute difference of the labels of its neighboring vertices. This theory led, in particular, to the Ringel-K\"{o}tzig graceful tree conjecture, which poses the question of whether all trees are graceful~\citep{bloom1979crk,rosa1966certain}.

However, while graphs may or may not admit a graceful labeling, the process of translating between vertex labels and graphs is not bijective. That is, if given a graph, one might be able to find a graceful labeling, but if given a graceful vertex labeling, one cannot recover the original graph. The desire to make this process bijective led to the notion of an \emph{autograph}, as defined above. \cite{bloom1979can} found all signatures for graphs with at most four vertices, as well as trees, paths, cycles, complete graphs, pyramids and $n$-prisms. \cite{gervacio1983wheels} investigated wheel graphs, and proved that they are \emph{proper}\footnote{A proper autograph contains only positive signature values.} autographs for $n=3, 4$ and $6$ only. \cite{harary1990sum} used the \emph{difference graph} terminology, wherein the edge labels may come from the difference (not necessarily absolute) of the vertex labels. In this realm \cite{sonntag2004difference} found conditions for building directed graphs from smaller difference graphs and also found that all cacti with girth at least 6 are autographs~\citep{sonntag2004cacti}. 

If the signature for a graph contains distinct elements, then the graph is known as a \emph{monograph}. \cite{ук2007several} studied the properties of monographs and discovered signatures for cycles, fan graphs, kite graphs and necklaces. \cite{seoud2011difference} listed signatures for all graphs of order 5 and discovered signatures for gear graphs, triangular snakes, and dragons, among other things. The related construction of \emph{mod difference digraphs}, in which $S = [n] = \{1,2,\ldots,n\}$ and the edge labels are taken modulo $n$, were explored by \cite{hedge2009on}.

\subsection{Our Contributions}

While most of the work cited above has focused on discovering signatures for families of graphs, thereby proving that they are autographs, comparatively less work has investigated the properties of \emph{non-autographs}. \cite{bloom1984acg} proved that there are infinitely many graphs on $n$ vertices that are not autographs, but did not produce one. In that effort, monographs of low codegree -- i.e. graphs in which each vertex is connected to most other vertices -- were used to put constraints on possible signature values for monographs. We extend this work to demonstrate that a certain family of graphs of low codegree are not autographs. Moreover, while~\citep{bloom1979can} found proper autograph signatures for all simple non-isomorphic graphs with at most 5 vertices, they exhibited three graphs with 6 vertices that were not proper autographs. We show that one of these graphs (Fig. \ref{fig:m6}) is not an autograph, making it the smallest non-autograph. 

\begin{theorem}
$M_6$ is the smallest non-autograph. 
\end{theorem}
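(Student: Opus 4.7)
The plan is to split the theorem into two claims: (a) every graph on fewer than 6 vertices, and every 6-vertex graph with fewer than 11 edges, is an autograph; and (b) $M_6$ itself admits no signature. Together these say $M_6$ is the smallest non-autograph.

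For (a) I would appeal to the prior work cited in the introduction. The results of \cite{bloom1979can} and \cite{seoud2011difference} together furnish signatures for every simple graph on at most 5 vertices, so it suffices to handle 6-vertex graphs with at most 10 edges. For these I would enumerate the finitely many non-isomorphic candidates and exhibit a signature for each, either by recognising the graph (or its complement) as a member of a family with a known signature (paths, cycles, complete graphs, wheels, bipartite graphs, and so on) or by direct construction. This is a bookkeeping exercise rather than a conceptual one.

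For (b) I would argue by contradiction. Suppose $S = \{a_1 \le a_2 \le \cdots \le a_6\}$ is a signature for $M_6$ under a bijection $\pi$. First $a_1 > 0$: otherwise $\pi(a_1)$ would be adjacent to every other vertex (since $|a_j - 0| = a_j \in S$ for each $j > 1$), forcing degree 5, whereas $M_6$ has maximum degree 4. Consequently every pairwise difference is at most $a_6 - a_1 < a_6$, so the 11 edge-differences must collectively lie in the at-most-5-element set $\{a_1, \ldots, a_5\}$, forcing heavy coincidences among the $a_i$. I would then exploit the structural fact $\overline{M_6} = 2P_3$: the four non-edges split into two length-two paths centred at the degree-3 vertices $v_0$ and $v_3$, while $\{v_1, v_2, v_4, v_5\}$ induces a $K_4$ in $M_6$ whose six pairwise differences must all land in $\{a_1, \ldots, a_5\}$. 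The argument then proceeds by casing on which sorted positions of $S$ are occupied by $v_0$ and $v_3$ and on the identification of the four $K_4$-vertices with their labels, tracking the arithmetic identities forced by the 11 edge-differences alongside the non-identities forced by the 4 non-edge differences, and deriving a contradiction in each case.

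The main obstacle is the case explosion. To keep the argument tractable I would first quotient by the automorphism group of $M_6$ (generated by the swap $v_0 \leftrightarrow v_3$ together with the corresponding swap inside the $K_4$, and by the independent flips of the pairs $\{v_1, v_5\}$ and $\{v_2, v_4\}$) to cut the number of essentially different label-assignments, and then use the $K_4$-constraints within each residual case to bound $a_6$ in terms of $a_1$, reducing the remaining problem to a short finite check. If a clean uniform argument remains elusive, a computer-assisted search over signatures with $a_6$ below an explicitly derived bound would finish (b).
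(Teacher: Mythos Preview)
Your proof of part (b) has a genuine gap: you have not ruled out negative signature values. Your argument ``First $a_1 > 0$: otherwise $\pi(a_1)$ would be adjacent to every other vertex (since $|a_j - 0| = a_j \in S$ for each $j>1$)'' only treats the case $a_1 = 0$; it says nothing when $a_1 < 0$. If $a_1 < 0$ and $a_j > 0$ then $|a_j - a_1| = a_j + |a_1| > a_j$, which need not lie in $S$ at all, so there is no forced degree-5 vertex. Consequently the inequality $a_6 - a_1 < a_6$ on which the rest of your argument rests is unavailable, and the entire ``all eleven edge-differences land in $\{a_1,\ldots,a_5\}$'' compression collapses.

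This is not a cosmetic omission. In the paper's proof the case analysis is organised precisely by the number $m \in \{0,1,2\}$ of negative entries in $S$ (bounded via Corollary~\ref{cor:neg_sig_values}), and the $m=1$ case is the longest and most delicate: one shows via the Maximum Element Lemma (Lemma~\ref{lemma:max}) and Lemma~\ref{lemma:smallest_non_signature} that the five non-negative entries are forced into an arithmetic progression, and then derives a contradiction. Your $K_4$-based casing on sorted positions could in principle be extended to negative labels, but the bound ``$a_6$ in terms of $a_1$'' you propose no longer follows, and the promised ``short finite check'' has no a priori search space. You would need a replacement structural lemma---something playing the role of the paper's arithmetic-progression machinery (Observations~\ref{obs:smallest_arithmetic}--\ref{obs:neg_0})---before any computer search becomes finite. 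As written, part~(b) handles only the proper-autograph case, which was already known from \cite{bloom1979can}.
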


\begin{theorem}
Let $G_n = K_n \setminus 2 C_{n/2}$ be the complete graph on $n$ vertices ($n$ is even), with two non-intersecting cycles deleted. Then the infinite family of graphs $G_n$ contains only non-autographs for $n \geq 8$. 
\end{theorem}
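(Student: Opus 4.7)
The plan is to argue by contradiction: assume $G_n$ admits a signature $S = \{s_1, \ldots, s_n\}$, and derive a contradiction from the tight constraints imposed by the codegree-$2$ condition. First I would handle preliminary normalizations. A short argument rules out $0 \in S$ (a vertex mapped to $0$ would be adjacent to every vertex whose signature value lies in $S$, producing too high a degree), and combined with the twin-sparse structure of $G_n$ this forces all $s_i$ to be distinct; negative signature values are handled by a symmetric reflection argument focused on the minimum. So we may sort and assume $0 < s_1 < \cdots < s_n$. The codegree-$2$ property applied at $v_1$ and $v_n$ then yields $s_4 \geq 2 s_1$ and $s_{n-3} \leq s_n - s_1$, and hence $s_n \geq 3 s_1$.

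The key technical tool is an involution argument at the maximum. Let $v_a, v_b$ denote the two non-neighbors of $v_n = \pi(s_n)$; then exactly $n - 3$ indices $j$ satisfy $s_n - s_j \in S$, and setting $\phi(j) = k$ where $s_j + s_k = s_n$ yields a well-defined involution on this $(n-3)$-element set $J$. One checks $\phi(j) \notin \{a,b\}$: if $\phi(j) = a$ then $s_n - s_a = s_j \in S$, contradicting the non-neighbor hypothesis. Since $n$ is even, $|J| = n - 3$ is odd, so $\phi$ must have a fixed point, forcing $s_n/2 \in S$. More strongly, $S \setminus \{s_a, s_b, s_n\}$ is symmetric about $s_n/2$: it decomposes into $(n-4)/2$ symmetric pairs summing to $s_n$, together with the singleton $\{s_n/2\}$.

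Next I would analyze $v_m = \pi(s_n/2)$ and its two non-neighbors. Since the reflection $s \mapsto s_n - s$ preserves $|s_m - s|$ on the paired part, any non-neighbor of $v_m$ that lies in a symmetric pair forces its partner to be a non-neighbor of $v_m$ as well. Combined with the fact that $v_n$ is always a neighbor of $v_m$ (because $|s_n - s_m| = s_n/2 \in S$), this narrows the possibilities to two cases: (i) the non-neighbors of $v_m$ coincide with $\{v_a, v_b\}$, or (ii) they form a single symmetric pair inside $S \setminus \{s_a, s_b, s_n\}$; the mixed case collapses by a direct contradiction with $s_n - s_a \notin S$. In case (i), $v_m$ and $v_n$ share both cycle-neighbors in $\overline{G_n}$, and two distinct vertices can share both cycle-neighbors in $C_k$ only when $k = 4$, which settles $n \geq 10$ immediately. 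In case (ii), I would iterate the involution-style analysis at $v_m$: combining the inherited symmetric structure with $s_n \geq 3 s_1$ and the extremal inequalities above forces an arithmetic contradiction among the parameters of the nested symmetric structure.

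The main obstacle is the $n = 8$ case, where case (i) survives the cycle-length test because opposite vertices of a $C_4$ genuinely do share both neighbors. Here I would dissect the configuration: $\{v_a, v_m, v_b, v_n\}$ forms one of the two $4$-cycles in $\overline{G_8}$, while the remaining four vertices (the two symmetric pairs summing to $s_n$) form the other $C_4$. The requirement that every vertex of one $C_4$ be $G_8$-adjacent to every vertex of the other forces sixteen cross-component differences into the $8$-element set $S$; together with the four non-edge differences within each $C_4$ (which must \emph{not} lie in $S$) and the explicit parametric form of the symmetric pairs governed by two parameters $\alpha, \beta$, a short enumeration shows that no consistent choice of $s_a, s_b, \alpha, \beta$ satisfies all these constraints simultaneously. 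Executing this final enumeration cleanly, without descending into unmanageable sub-casework, is the most delicate step of the proof.
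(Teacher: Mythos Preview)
Your involution at $s_n$ is an elegant repackaging of the paper's Maximum Element Lemma, and the parity observation forcing $s_n/2 \in S$ is a nice touch the paper does not isolate. But two steps are genuine gaps rather than routine details. First, the reduction to $0 < s_1 < \cdots < s_n$ does not go through. A vertex labeled $0$ is adjacent to $\pi(s)$ iff $|s| \in S$, which is automatic only for $s > 0$; with two negative values the $0$-vertex can have degree exactly $n-3$, so your one-line dismissal fails (the paper needs a dedicated lemma here). More seriously, there is no ``reflection'' taking an arbitrary signature to an all-positive one: neither $S \mapsto -S$ nor $S \mapsto S+c$ preserves the condition $|s_i - s_j| \in S$, and an argument ``focused on the minimum'' gives you information about $s_1$, not a license to assume $s_1 > 0$. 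The paper accordingly spends the bulk of its proof on the $m=1$ and $m=2$ cases via the arithmetic-progression machinery of its preliminaries section; you have not replaced that work.

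Second, your case (ii) is a hope, not an argument. You have established that the two non-neighbors of $v_m = \pi(s_n/2)$ form a symmetric pair, hence $|s_n/2 - s_j| \notin S$; but ``iterate the involution-style analysis at $v_m$'' names no involution, and none is evident, since the differences $|s_m - s_k|$ fall on both sides of $s_m$ with no intrinsic pairing. Nothing you wrote connects the symmetric-pair hypothesis to the bounds $s_4 \geq 2s_1$, $s_n \geq 3s_1$ in a way that yields a contradiction, and I do not see how to close this without substantial further structure. The paper takes a completely different route in the all-positive case: it partitions $S$ into at most three arithmetic progressions of common difference $s_1$ and reduces everything to showing that $\{s,2s,\ldots,ns\}$ with one or two deletions is never a signature for $G_{n-1}$ or $G_{n-2}$, which becomes a direct degree computation. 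Your symmetry framework may ultimately be completable, but as it stands both the main case (ii) for $n \geq 10$ and the $n=8$ enumeration are unresolved.
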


This paper proceeds as follows: in Section \ref{sec:prelim} we develop some preliminary notions and lemmas that constrain possible signatures for graphs based on certain properties. In particular, we investigate the relationship between autographs and signatures composed of arithmetic progressions. In Section \ref{sec:m6} we prove that $M_6$ is the smallest non-autograph. In Section \ref{sec:g_n} we prove that $G_n$ is a non-autograph for $n \geq 8$. We conclude with open problems and thoughts on future work. 

\section{Preliminaries}
\label{sec:prelim}

Recall that if the signature $S(G)$ for a graph $G$ is a \emph{set} (i.e. not a multiset), then that graph is a \emph{monograph}. We show presently that both $M_6$ and $G_n$ are monographs.

\begin{obs}\label{obs:deg_of_0}
If $0 \in S(G)$, then any vertex $v \in V(G)$ with $\pi^{-1}(v) = 0$ is adjacent to every vertex with a non-negative signature value. 
\end{obs}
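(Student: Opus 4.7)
The plan is to prove this observation by direct application of the autograph definition, since this is essentially an unpacking of that definition together with the hypothesis that $0 \in S(G)$.

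First I would fix the setup: let $G$ be an autograph with signature $S$ and bijection $\pi : S \to V(G)$, and suppose $v \in V(G)$ satisfies $\pi^{-1}(v) = 0$. Let $u \neq v$ be any other vertex, and write $s = \pi^{-1}(u)$, with the assumption $s \geq 0$. By the definition of an autograph, the edge $(v, u)$ belongs to $E(G)$ if and only if $|\pi^{-1}(v) - \pi^{-1}(u)| = |0 - s| = s$ is an element of $S$.

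The second step is to observe that $s = \pi^{-1}(u) \in S$ holds automatically, since $\pi$ is a bijection from $S$ onto $V(G)$, so every label of a vertex of $G$ is by construction a member of the signature. Hence the difference $s$ lies in $S$, and therefore the edge $(v, u)$ is present in $G$.

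Finally, I would note the (mild) edge case $s = 0$: this corresponds to a repeated value $0$ occurring in the multiset $S$, in which case $|0-0| = 0$ still lies in $S$ by the hypothesis $0 \in S(G)$, so the conclusion is unaffected. There is no real obstacle here; the argument is a one-line verification, and the observation's utility lies not in its depth but in the strong structural constraint it will later impose, namely that any putative signature containing $0$ forces the corresponding vertex to be adjacent to all vertices bearing non-negative labels, which will be useful in ruling out candidate signatures for $M_6$ and $G_n$ in the subsequent sections.
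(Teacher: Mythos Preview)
Your proof is correct and follows essentially the same route as the paper's: both compute $|\pi^{-1}(v)-\pi^{-1}(u)|=|0-s|=s$ and observe that $s\in S$ because it is the label of a vertex. Your treatment is slightly more careful in that you explicitly address the repeated-zero edge case, whereas the paper's proof only considers $\pi^{-1}(w)>0$; this extra care is harmless but unnecessary here since the paper immediately restricts attention to monographs.
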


\begin{proof}
For any other vertex $w \in V(G)$ with $\pi^{-1}(w) > 0$, an edge between $v$ and $w$ exists, since $|\pi^{-1}(w) - \pi^{-1}(v)| = |\pi^{-1}(w)| = \pi^{-1}(w) \in S(G)$. 
\end{proof}

\begin{obs}\label{diff_neigobor}
If two vertices $v, w \in V(G)$ have the same label, then $v$ and $w$ have the same neighborhood. 
\end{obs}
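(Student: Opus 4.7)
The plan is to unwind the definition of autograph directly. Suppose $v, w \in V(G)$ share a label, so $\pi^{-1}(v) = \pi^{-1}(w) = s$ (this requires $s$ to appear with multiplicity at least $2$ in the multiset $S$). For any third vertex $u \in V(G) \setminus \{v,w\}$ with label $t = \pi^{-1}(u)$, the defining biconditional of an autograph says that the edge $(v,u)$ lies in $E(G)$ iff $|s - t| \in S$. Replacing $v$ by $w$ gives exactly the same condition $|s - t| \in S$. Hence $u \sim v$ iff $u \sim w$, so $v$ and $w$ have the same set of neighbors among $V(G) \setminus \{v,w\}$.

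The only subtlety, which I would flag explicitly, is the status of the edge between $v$ and $w$ themselves: by the same rule, $v \sim w$ iff $|s-s| = 0 \in S$. Either both $v \in N(w)$ and $w \in N(v)$ hold (when $0 \in S$) or neither does, so the statement is cleanest when phrased as $N(v) \setminus \{w\} = N(w) \setminus \{v\}$, or equivalently $N[v] = N[w]$ in the closed-neighborhood sense. I do not anticipate any real obstacle here; the entire argument is a one-line application of the autograph definition, with the only judgment call being how to interpret "same neighborhood" in the boundary case where $v$ and $w$ are themselves adjacent.
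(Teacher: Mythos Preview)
Your proposal is correct and follows essentially the same route as the paper: both arguments simply apply the autograph biconditional with $\pi^{-1}(v)=\pi^{-1}(w)$ to conclude that adjacency to any other vertex is determined identically for $v$ and $w$. If anything, your version is slightly more careful than the paper's, which asserts $N_G(v)=N_G(w)$ without explicitly addressing the $v$--$w$ edge; your remark that the precise conclusion is $N(v)\setminus\{w\}=N(w)\setminus\{v\}$ (equivalently $N[v]=N[w]$) is a fair clarification, though it does not affect the downstream use of the observation.
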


\begin{proof}
Let $N_G(v), N_G(w)$ denote the set of vertices in $G$ adjacent to vertices $v,w \in V(G)$, respectively, where $\pi^{-1}(v) = \pi^{-1}(w)$.  
For any $x \in N_G(v)$, $|\pi^{-1}(x)-\pi^{-1}(w)| = |\pi^{-1}(x)-\pi^{-1}(v)| \in S(G)$, since $v, w$ have the same label. Thus, $x \in N_G(w)$. Similarly, for any $y \in N(w)$, $|\pi^{-1}(y)- \pi^{-1}(w)| = |\pi^{-1}(y)- \pi^{-1}(v)| \in S(G)$ and $y \in N_G(v)$. Therefore, $N_G(v) = N_G(w)$.
\end{proof}

It follows that if every vertex in a graph $G$ has a different set of neighbors, then $G$ is a monograph. Both $M_6$ and $G_n$ have this property, and accordingly we restrict our attention to monographs throughout this paper.

\subsection{Notation}

We assume that $G$ is a monograph with signature $S(G) = S = \{s_1, \cdots, s_n\}$ where $s_1 < s_2 < \ldots < s_n$. 
Let $m$ be the number of negative signature values in $S(G)$, and set $S^- = \{s_1,\cdots,s_m\}$ and $S^+ = \{s_{m+1},\cdots, s_n\}$. Thus, we have that
$$
	S(G) = S^- \cup S^+, \qquad \text{ with } \underbrace{s_1 < s_2 < \ldots s_m}_{negative} < \underbrace{s_{m+1} < \ldots < s_{n-1} <  s_n}_{non-negatitive} 
$$ 
If $\delta$ is the minimum degree of any vertex in $G$, then $k = n-1-\delta$ is the \emph{codegree} of $G$ -- i.e. the maximum number of vertices to which each vertex in $G$ is not adjacent. 

Let $S= \{ ts, (t+1)s, ..., (n-1+t)s\}$ be a signature for a proper monograph consisting of an arithmetic progression of $n$ terms, beginning at $ts$ and continuing in integer steps of size $s$. Note that $S$ defines a monograph. We adopt the simpler notation $[t]_s^n$ for such a signature.

\begin{dnt}
For any signature value $s \in S(G)$, where $G$ is a monograph, the difference set $D(s) = \{ |s - t| : t \in S, t \neq s\}$ is the set of all possible edge labels associated with the vertex $\pi(s)$ in $G$.
\end{dnt}

\subsection{Basic Results}

\begin{lemma}\label{lemma:kth-largest}
In a monograph, the vertex labeled with the $r^{th}$ largest non-negative signature value can be adjacent to at most $r-1$ vertices labeled with negative signature values. 
\end{lemma}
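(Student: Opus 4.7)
The plan is to analyze the vertex $v = \pi(s_{n-r+1})$ labeled with the $r^{th}$ largest non-negative signature value, and to argue that each of its neighbors carrying a negative label forces a \emph{distinct} signature value strictly greater than $s_{n-r+1}$ to appear in $S$. Since there are exactly $r-1$ such values, namely $s_{n-r+2}, \ldots, s_n$, this immediately gives the bound.

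Concretely, I would first fix the notation from the preamble and consider an edge between $v$ and some vertex $w = \pi(s_j)$ with $j \leq m$, so that $s_j < 0$. By the definition of a signature, such an edge can exist only if $|s_{n-r+1} - s_j| \in S$. Because $s_{n-r+1} \geq 0$ while $s_j < 0$, this absolute value simplifies to $s_{n-r+1} - s_j$, which is \emph{strictly} greater than $s_{n-r+1}$ and therefore must lie in $\{s_{n-r+2}, \ldots, s_n\}$. Next, since $G$ is a monograph, $S$ is a set, so distinct negative-labeled neighbors of $v$ have distinct labels $s_j$, and hence produce distinct forced differences $s_{n-r+1} - s_j$. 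The assignment $w \mapsto s_{n-r+1} - \pi^{-1}(w)$ is thus an injection from the negative-labeled neighbors of $v$ into a set of cardinality $r-1$, which is exactly the desired conclusion.

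There is no real obstacle here; the argument is a one-step injection-and-count that leverages only the monograph property together with the sign information of $s_j$. The one detail to keep straight is that the inequality $s_{n-r+1} - s_j > s_{n-r+1}$ is \emph{strict} (so the image of the injection avoids $s_{n-r+1}$ itself), and this strictness comes precisely from $s_j < 0$, not merely $s_j \leq 0$. This is also the reason the statement must restrict to the $r^{th}$ largest \emph{non-negative} value rather than any signature value: the argument breaks down as soon as $s_{n-r+1}$ itself is allowed to be negative, since then $s_{n-r+1} - s_j$ may fail to exceed $s_{n-r+1}$.
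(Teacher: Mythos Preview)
Your proof is correct and follows essentially the same approach as the paper: both argue that each negative-labeled neighbor of $\pi(s_{n-r+1})$ forces a distinct element of $S$ strictly larger than $s_{n-r+1}$, of which there are only $r-1$. The only cosmetic difference is that the paper phrases this as a proof by contradiction, whereas you phrase it directly as an injection; the underlying counting argument is identical.
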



\begin{proof}
Note that $s_{n-r+1}$ is the $r^{th}$ largest positive signature value, and $r-1$ vertices have signature values greater than  $s_{n-r+1}$. Assume by contradiction that $s_{n-r+1}$ has at least $r$ neighbors with negative signature values $s_i$, for $1 \leq i \leq m$. For all such $s_i$, $|s_{n-r+1} - s_i| = s_{n-r+1} + |s_i| \in S^+$. Since $G$ is a monograph, we have identified at least $r$ distinct values in $S^+$ that are greater than $s_{n-r+1}$, a contradiction.
\end{proof}

\begin{cor}\label{cor:neg_sig_values}
The number of negative signature values in any monograph with $n$ vertices is at most $ n - 1 - deg(s_n)$.
\end{cor}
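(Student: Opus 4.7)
The plan is to apply Lemma~\ref{lemma:kth-largest} directly in the special case $r = 1$. The corollary is essentially the degree bound that falls out immediately once we observe that the vertex labeled $s_n$ can have \emph{no} neighbors with negative labels.

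First, I would instantiate Lemma~\ref{lemma:kth-largest} at $r = 1$. Since $s_n$ is the $1^{st}$ largest non-negative signature value, the lemma says that $\pi(s_n)$ is adjacent to at most $r - 1 = 0$ vertices carrying negative labels. In other words, every neighbor of $\pi(s_n)$ has a non-negative label.

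Next, I would count. Using the notation of the paper, there are $m$ vertices with negative labels and $n - m$ vertices with non-negative labels; subtracting $\pi(s_n)$ itself leaves exactly $n - m - 1$ other vertices carrying non-negative labels. Since all $\deg(s_n)$ neighbors of $\pi(s_n)$ must come from this set, we conclude $\deg(s_n) \leq n - m - 1$, which rearranges to $m \leq n - 1 - \deg(s_n)$, as desired.

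The only potential subtlety — and it is barely one — is bookkeeping: making sure that the ``$r^{th}$ largest'' indexing in the lemma is interpreted so that $r = 1$ genuinely corresponds to $s_n$, and that ``non-negative'' versus ``positive'' is handled consistently (the lemma is stated for non-negative labels, which is what we want here). Beyond that, the corollary is a one-line consequence of the lemma, so no real obstacle is expected.
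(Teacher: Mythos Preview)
Your proposal is correct and mirrors the paper's own proof almost verbatim: apply Lemma~\ref{lemma:kth-largest} with $r=1$ to see that $s_n$ has no negatively-labeled neighbors, then count the remaining $n-m-1$ non-negative labels to get $\deg(s_n)\le n-m-1$ and rearrange. The only difference is cosmetic (the paper writes ``positive'' where you write ``non-negative''), and your note about that subtlety is well taken.
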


\begin{proof}
Apply Lemma \ref{lemma:kth-largest} to the maximum signature value $s_n$, and we get that $s_n$ can only be adjacent to vertices with positive signature values, of which there are $n-m-1$ other than $s_n$. Thus, 
$$
	\deg(s_n) \leq n-m-1 \Rightarrow m \leq n - 1 - \deg(s_n)
$$
\end{proof}

\begin{lemma} (The Maximum Element Lemma for Monographs) \label{lemma:max}
In a monograph, if 0 is not in the signature and $s_n$ is adjacent to all vertices with positive signature values, then
$$
	s_n = s_{m+1} + s_{n-1} = s_{m+2} + s_{n-2} = \cdots = s_{\lfloor \frac{m+n}{2} \rfloor} + s_{\lceil \frac{m+n}{2} \rceil}
$$

\end{lemma}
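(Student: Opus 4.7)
The plan is to exploit the hypothesis directly. Since $s_n$ is adjacent to every vertex carrying a positive signature value, for each $j \in \{m+1, \ldots, n-1\}$ the edge label $|s_n - s_j| = s_n - s_j$ must itself lie in $S$. Because $0 \notin S$, each such $s_j$ is strictly between $0$ and $s_n$, so $s_n - s_j$ is also strictly between $0$ and $s_n$. The only elements of $S$ in that open range are $s_{m+1}, \ldots, s_{n-1}$, so $s_n - s_j$ is one of them.

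I would next package this into a self-map $\phi \colon \{s_{m+1}, \ldots, s_{n-1}\} \to \{s_{m+1}, \ldots, s_{n-1}\}$ defined by $\phi(s_j) = s_n - s_j$. The map is injective (from $\phi(s_j) = \phi(s_k)$ one recovers $s_j = s_k$), and since domain and codomain coincide as finite sets of the same cardinality, $\phi$ is a bijection. Crucially, $\phi$ is strictly order-reversing: $s_j < s_k$ forces $\phi(s_j) > \phi(s_k)$. An order-reversing bijection of a finite totally ordered set must send the $i$-th smallest element to the $i$-th largest, so $\phi(s_{m+i}) = s_{n-i}$ for every $i \in \{1, \ldots, n-m-1\}$. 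Rearranging yields exactly $s_n = s_{m+i} + s_{n-i}$.

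The chain of equalities in the lemma is then just this family of equations, recorded for $i = 1, 2, \ldots$ up through the midpoint of $\{m+1, \ldots, n-1\}$; past that point the pair $(m+i, n-i)$ repeats a previously listed pair, so the last distinct equation is $s_n = s_{\lfloor (m+n)/2 \rfloor} + s_{\lceil (m+n)/2 \rceil}$. When $n - m$ is even, the set $\{s_{m+1}, \ldots, s_{n-1}\}$ has odd cardinality, forcing $\phi$ to have a unique fixed point at its middle element, and the final equation collapses to $s_n = 2 s_{(m+n)/2}$, i.e.\ $s_{(m+n)/2} = s_n / 2$. There is no substantive obstacle here; the only care required is in the very first step, where one must verify that $s_n - s_j$ cannot land outside $\{s_{m+1}, \ldots, s_{n-1}\}$ — in particular that it cannot equal $s_n$ itself or any element of $S^-$ — both of which are immediate from the hypothesis $0 \notin S$ and the strict inequalities $0 < s_j < s_n$.
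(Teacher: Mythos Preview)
Your proof is correct and follows essentially the same approach as the paper: both arguments observe that the differences $s_n - s_j$ for $j \in \{m+1,\ldots,n-1\}$ are $n-m-1$ distinct positive elements of $S$ strictly below $s_n$, hence must coincide with $\{s_{m+1},\ldots,s_{n-1}\}$, and then match them up by monotonicity. Your framing via an order-reversing bijection is a slightly more formal packaging of what the paper does by simply listing both sets in decreasing order and equating term by term.
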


\begin{proof}
Consider the difference set $D(s_n)$ of $s_n$. In decreasing order,
 
$$D(s_n)=\{s_n-s_{m+1}, s_n-s_{m+2},...,s_n-s_{n-1}\}$$
  
By Lemma~\ref{lemma:kth-largest}, $s_n$ is only adjacent to vertices with non-negative signature values. We also know $0 \notin S$. Thus, the $n-m-1$ terms of $D(s_n)$ are distinct and contained in $S^+\setminus\{s_n\}$. Note that $S^+\setminus\{s_n\}$ has $n-m-1$ positive signature values. Thus, there is a one-to-one correspondence between $D(s_n)$ and $S^+\setminus\{s_n\}$, that is $D(s_n)=S^+\setminus\{s_n\}$. In decreasing order, 
  
$$S^+\setminus\{s_n\}=\{s_{n-1},s_{n-2},...,s_{m+1}\}$$

Since $D(s_n)=S^+\setminus\{s_n\}$,
$$s_n-s_{m+1}=s_{n-1},\quad s_n-s_{m+2}=s_{n-2} \ ...$$

The result now follows. 
\end{proof}

\subsection{Arithmetic Progressions}
\label{sec:arith}

Recall that $S= [t]_s^n = \{ ts, (t+1)s, ..., (n-1+t)s\}$ is a signature for a proper monograph consisting of an arithmetic progression of $n$ terms, beginning at $ts$ and continuing in integer steps of size $s$. This section concerns the properties of graphs resulting from deleting members of $S=[1]_s^n$.

\begin{lemma} \label{lemma:one_deletion_degrees}
Let $S = [1]_s^n = \{s, 2s, ..., ns\}$ be a signature for $K_n$. If the vertex of signature value $is$, for $1\leq i \leq n$, is removed from $K_n$, the degree of $ps$, where $p\neq i$, $1\leq p\leq n$ is
$$\deg(ps) = \epsilon_i+\begin{cases} n-4 & \text{if }  p \in(i,n-i] \\ n-2 & \text{if } p\in (n-i,i) \\ n-3 &\text{otherwise} \end{cases}$$
where $$
\epsilon_i = \begin{cases} 1 & \text{if } p= 2i,\  i\leq \lfloor\frac{n}{2}\rfloor \\ 0 & \text{otherwise} \end{cases}$$
\end{lemma}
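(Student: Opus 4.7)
The plan is to count non-neighbors of $ps$ rather than neighbors directly, since in the reduced graph vertex $ps$ is adjacent to vertex $qs$ (for $q \in [1,n]\setminus\{i,p\}$) if and only if $|p-q|s$ still lies in the signature $[1]_s^n\setminus\{is\}$. Because $p,q\in[1,n]$ forces $|p-q|\in[1,n-1]$, this condition collapses to $|p-q|\neq i$. Starting from the $n-2$ candidate partners, the degree of $ps$ is therefore
\[
\deg(ps) \;=\; (n-2) \;-\; \bigl|\{q\in[1,n]\setminus\{i,p\}\,:\,|p-q|=i\}\bigr|.
\]

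First I would observe that the only solutions to $|p-q|=i$ are $q=p+i$ and $q=p-i$, and check membership in the vertex set. The candidate $q=p+i$ lies in $[1,n]\setminus\{i,p\}$ exactly when $p\le n-i$. The candidate $q=p-i$ lies in $[1,n]\setminus\{i,p\}$ exactly when $p>i$ \emph{and} $p\neq 2i$; the exception $p=2i$ is precisely when $q=p-i$ coincides with the deleted vertex $is$ and hence contributes no non-neighbor at all.

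Next I would partition on the location of $p$ relative to $i$ and $n-i$. If $p\in(i,n-i]$ and $p\neq 2i$, both candidates are valid, so we subtract $2$ and obtain $\deg(ps)=n-4$. If $p\in(n-i,i)$ (only possible when $i>n/2$), neither candidate is valid, giving $\deg(ps)=n-2$. In every remaining regime — $p<i$ with $p\le n-i$, or $p>n-i$ with $p>i$ and $p\neq 2i$ — exactly one candidate is valid, yielding $\deg(ps)=n-3$. This reproduces the three cases of the main formula.

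The only remaining bookkeeping is the $p=2i$ anomaly, which is the single real obstacle to a clean write-up. When $i\le\lfloor n/2\rfloor$ so that $p=2i$ is a legitimate vertex, one of the two potential non-neighbors collapses onto the removed vertex $is$; thus the subtraction is one less than the generic case would dictate, and the degree is one larger. Checking this against the partition: if additionally $i\le n/3$ then $2i\in(i,n-i]$ and the generic value $n-4$ must be corrected to $n-3$; if $n/3<i\le n/2$ then $2i$ falls into the ``otherwise'' region and the generic $n-3$ must be corrected to $n-2$. In both situations the correction is precisely $+1$, so defining $\epsilon_i=1$ when $p=2i$ and $i\le\lfloor n/2\rfloor$ (and $0$ otherwise) records exactly this adjustment and completes the proof.
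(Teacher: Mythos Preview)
Your proof is correct and follows essentially the same approach as the paper: both arguments reduce to checking whether the two candidates $p\pm i$ land inside $[1,n]$ (and handling the overlap at $p=2i$), with the only cosmetic difference that you count non-neighbors starting from the $n-2$ surviving vertices while the paper counts edges lost starting from degree $n-1$ in $K_n$. The case split and the $\epsilon_i$ correction are identical in substance.
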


\begin{figure}
	\centering
	\begin{tikzpicture}[domain=0:1,xscale=1, yscale=1
,sensor/.style={circle,draw=black,fill=red!50,thick,opacity=0.75,inner sep=0pt,minimum size=2mm}
,dummy/.style={circle,draw=black,fill=red!50,thick,opacity=0.75,inner sep=0pt,minimum size=1mm}
,moved/.style={circle,draw=black,fill=blue!50,thick,opacity=0.75,inner sep=0pt,minimum size=2mm}]
\draw[-] (1,0) -- (10,0) coordinate (x axis);
	\foreach \x/\xtext in {1, 2, ..., 10}
		\draw (\x,1pt) -- (\x,-1pt) node[anchor=north] {$\xtext$};
\node [moved] (i) at (3, 0) [label=above:$is$] {};
\node [dummy] (l) at (5, 0) [label=above:$(p-i)s$] {};
\node [sensor] (p) at (8, 0) [label=above:$ps$] {};
\node [dummy] (r) at (11, 0) [label=above:$(p+i)s$] {};
\path [bend left,dashed] (p) edge (i);
\path [bend left,dashed] (l) edge (p);
\path [bend left,dotted] (p) edge (r);
\end{tikzpicture}
	\caption{Example of one deletion from a complete graph signature. Illustrated is the case where $n=10, i=3$, and $p=8$. For the vertex $ps$, if $is$ is deleted, then the two dashed edges are lost, but the one dotted edge never existed. Thus, the degree of $ps$ in the resulting graph is $n-3$. This is case 3 in the proof below ($p > \max(n-i,i)$.)}
	\label{fig:delete1}
\end{figure}
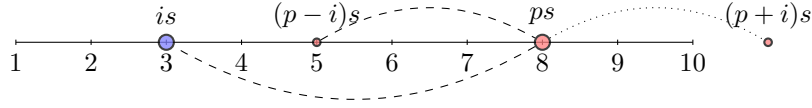

\begin{proof}
We will first consider the general case $p\neq 2i$. Before the deletion of $is$, $\deg(ps)=n-1$ for all $1\leq p\leq n$. The vertices whose degrees are potentially affected by the deletion of $is$ have signature values $(p-i)s$ and $(p+i)s$. Note that $(p-i)s<ns$ and $(p+i)s>0$ always hold. Thus, we will discuss the relationship of $(p-i)s$ with 0, and $(p+i)s$ with $ns$ to determine whether they are in the signature or affected by the deletion. We illustrate one case in Figure \ref{fig:delete1}, but the reader is encouraged to envision how the other cases could be similarly illustrated. 

\begin{enumerate}
\item $p+i\leq n$ and $p-i>0 \Rightarrow i<p\leq n-i$. \\
Both $(p+i)s$ and $(p-i)s$ are in the signature. Thus, three edges adjacent to $ps$: $(is,ps),((p-i)s,ps),((p+i)s,ps)$ are deleted after the deletion of the vertex $is$ and $\deg(ps)=n-1-3=n-4$.

\item $p+i\leq n$ and $p-i< 0 \Rightarrow p\leq\min(n-i,i)$. \\
$(p+i)s$ is in the signature and $(p-i)s$ is not. Thus, two edges adjacent to $ps$: $(is,ps),((p+i)s,ps)$ are deleted and $\deg(ps)=n-1-2=n-3$.

\item $p+i> n$ and $p-i> 0 \Rightarrow p>\max(n-i,i)$. \\
$(p-i)s$ is in the signature and $(p+i)s$ is not. Two edges are deleted and $\deg(ps)=n-1-2=n-3$. An example of this case is illustrated in Figure \ref{fig:delete1}. 

\item $p+i> n$ and $p-i< 0 \Rightarrow n-i<p< i$. \\
Neither $(p+i)s$ nor $(p-i)s$ is in the signature. Only the edge $(is, ps)$ is deleted and $\deg(ps)=n-1-1=n-2$. 
\end{enumerate}
If $p=2i$, then $(p-i)s$ and $is$ are the same vertex. That is, $(ps,(p-i)s)$ and $(ps,is)$ are the same edge. Therefore, we overcounted the number of deleted edges by 1 and this defines $\epsilon_i$.
\end{proof}

This result can be extended to the removal of two signature values, and we will use it in the proof of the main result. 

\begin{lemma} \label{lemma:two_deletions_degree}
Let $S = [1]_s^n = \{s, 2s, ..., ns\}$ be a signature for $K_n$. If vertices of signature value $is$ and $js$, for $1\leq i<j\leq n$, are removed from $K_n$, the degree of $ps$, where $p\neq i,j$, $1\leq p\leq n$ is
\nonumber
\begin{multline}
\deg(ps) = \epsilon_i+\epsilon_j+\gamma_{ij}+\beta_{ij}\\
+\begin{cases} n-7 & \text{if }  j<p \leq n-j \\ n-6 & \text{if } \max(j,n-j)<p\leq n-i \text{ or } i<p\leq \min(n-j,j)\\ n-4 & \text{if } \max(i,n-i)<p<j \text{ or } n-j<p\leq\min(n-i,i)\\ n-3 & \text{if }  n-i<p <i \\ n-5 &\text{otherwise} \end{cases}
\end{multline}
where $$
\gamma_{ij} = \begin{cases} 2 & \text{if } p= i+j \\ 0 & \text{otherwise} \end{cases} , \qquad \beta_{ij} = \begin{cases} 1 & \text{if } p= j-i \\ 0 & \text{otherwise} \end{cases}$$ 
and $\epsilon_i$ and $\epsilon_j$ are defined as before. 
\end{lemma}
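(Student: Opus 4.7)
The proof will follow the edge-counting strategy of Lemma~\ref{lemma:one_deletion_degrees}, now tracking losses from two simultaneous deletions. Starting from $K_n$ where $\deg(ps) = n - 1$, the removal of vertices $is$ and $js$ (along with their signature values) destroys an edge $(ps, qs)$ if and only if either (i) $q \in \{i, j\}$, i.e., the neighbor itself was deleted, or (ii) $|p - q| \in \{i, j\}$, i.e., the matching signature value was deleted. Type~(i) contributes a uniform loss of $2$. The type~(ii) losses correspond to $q \in \{p - i,\, p + i,\, p - j,\, p + j\}$; since $0 < i < j$, these four candidate labels are always distinct.

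The main step is a case analysis on how many of $p - i, p + i, p - j, p + j$ lie in $[1, n]$. Exactly as in Lemma~\ref{lemma:one_deletion_degrees}, $p - i$ is valid iff $p > i$, $p + i$ is valid iff $p \leq n - i$, and analogously for $j$. Comparing $p$ against the four thresholds $i, j, n-i, n-j$ partitions the admissible range of $p$ into the five regions listed in the statement, corresponding respectively to $4, 3, 1, 0,$ or $2$ type-(ii) losses and hence to the base degrees $n - 7, n - 6, n - 4, n - 3,$ and $n - 5$. For example, when $j < p \leq n - j$ all four candidates are in range, yielding $2 + 4 = 6$ lost edges and degree $n - 7$; when $n - i < p < i$ (possible only if $i > n/2$) none of the four candidates is in range, yielding only the $2$ type-(i) losses and degree $n - 3$.

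Finally, I correct for the coincidences in which a type-(ii) candidate $p \pm i$ or $p \pm j$ equals a deleted label $i$ or $j$: in that case the corresponding edge has already been counted under type~(i), so the naive count over-subtracts. There are precisely four coincidence conditions: $p = 2i$ forces $p - i = i$ and contributes $\epsilon_i$; $p = 2j$ forces $p - j = j$ and contributes $\epsilon_j$; $p = j - i$ forces $p + i = j$ and contributes $\beta_{ij}$; and $p = i + j$ forces \emph{both} $p - i = j$ and $p - j = i$, which accounts for the coefficient $\gamma_{ij} = 2$. Two of these can co-occur only in the configuration $j = 3i$ with $p = 2i = j - i$, in which case the additive structure of the correction handles matters automatically. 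The main obstacle is the bookkeeping: verifying that the five enumerated regions correctly partition the range of $p$ under every possible ordering of $i, j, n-i, n-j$ (especially when $i$ or $j$ exceeds $n/2$ and some listed intervals collapse), and tracking strict versus non-strict boundary inequalities carefully so that no value of $p$ is omitted or double-counted.
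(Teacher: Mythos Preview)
Your proposal is correct and follows essentially the same approach as the paper: both count edge losses by determining which of the four candidate neighbors $p\pm i$, $p\pm j$ fall in $[1,n]$ and then add the same four correction terms for the coincidences $p\in\{2i,2j,j-i,i+j\}$. The only presentational difference is that the paper spells out nine explicit subcases (later merged into the five regions of the statement) whereas you organize the same count directly around the four thresholds $i,j,n-i,n-j$; your observation that the only possible overlap among corrections is $p=2i=j-i$ when $j=3i$ is a nice extra detail the paper does not make explicit.
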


\begin{figure}
	\centering
	\begin{tikzpicture}[domain=0:1,xscale=1, yscale=1
,main/.style={circle,draw=black,fill=red!50,thick,opacity=0.75,inner sep=0pt,minimum size=2mm}
,dummy/.style={circle,draw=black,fill=red!50,thick,opacity=0.75,inner sep=0pt,minimum size=1mm}
,deleted/.style={circle,draw=black,fill=blue!50,thick,opacity=0.75,inner sep=0pt,minimum size=2mm}]
\draw[-] (1,0) -- (12,0) coordinate (x axis);
	\foreach \x/\xtext in {1, 2, ..., 12}
		\draw (\x,1pt) -- (\x,-1pt) node[anchor=north] {$\xtext$};
\node [deleted] (i) at (2, 0) [label=above:$is$] {};
\node [deleted] (j) at (3, 0) [label=above:$js$] {};
\node [dummy] (li) at (3, 0) [] {};
\node [dummy] (ri) at (7, 0) [] {};
\node [main] (p) at (5, 0) [label=above:$ps$] {};
\node [dummy] (lj) at (2, 0) [] {};
\node [dummy] (rj) at (8, 0) [label=above:$(p+j)s$] {};
\path [bend left,dashed] (p) edge (i);
\path [bend left,dashed] (p) edge (li);
\path [bend left,dashed] (p) edge (ri);
\path [bend left,dashed] (j) edge (p);
\path [bend left,dashed] (lj) edge (p);
\path [bend left,dashed] (rj) edge (p);
\end{tikzpicture}
	\caption{Example of two deletions from a complete graph signature. Illustrated is the case where $n=12, i=2, j=3$, and $p=5$. For the vertex $ps$, if $is$ and $js$ are deleted, then the six dashed edges are lost, but we have double-counted two of them. Thus, the degree of $ps$ in the resulting graph is $n-5$. This is case 1 in the proof below, with $\gamma_{23} = 2$. }
	\label{fig:delete2_again}
\end{figure}
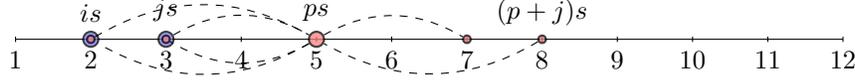

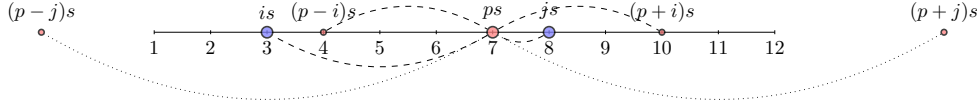
\begin{figure}
	\centering
	\scalebox{0.75}{\begin{tikzpicture}[domain=0:1,xscale=1, yscale=1
,main/.style={circle,draw=black,fill=red!50,thick,opacity=0.75,inner sep=0pt,minimum size=2mm}
,dummy/.style={circle,draw=black,fill=red!50,thick,opacity=0.75,inner sep=0pt,minimum size=1mm}
,deleted/.style={circle,draw=black,fill=blue!50,thick,opacity=0.75,inner sep=0pt,minimum size=2mm}]
\draw[-] (1,0) -- (12,0) coordinate (x axis);
	\foreach \x/\xtext in {1, 2, ..., 12}
		\draw (\x,1pt) -- (\x,-1pt) node[anchor=north] {$\xtext$};
\node [deleted] (i) at (3, 0) [label=above:$is$] {};
\node [deleted] (j) at (8, 0) [label=above:$js$] {};
\node [dummy] (li) at (4, 0) [label=above:$(p-i)s$] {};
\node [dummy] (ri) at (10, 0) [label=above:$(p+i)s$] {};
\node [main] (p) at (7, 0) [label=above:$ps$] {};
\node [dummy] (lj) at (-1, 0) [label=above:$(p-j)s$] {};
\node [dummy] (rj) at (15, 0) [label=above:$(p+j)s$] {};
\path [bend left,dashed] (p) edge (i);
\path [bend left,dashed] (li) edge (p);
\path [bend left,dashed] (p) edge (ri);
\path [bend left,dashed] (j) edge (p);
\path [bend left,dotted] (p) edge (lj);
\path [bend left,dotted] (rj) edge (p);
\end{tikzpicture}}
	\caption{Example of two deletions from a complete graph signature. Illustrated is the case where $n=12, i=3, j=8$, and $p=7$. If $is$ and $js$ are deleted, then the four dashed edges are lost, but the two dotted edges never existed. Thus, the degree of $ps$ in the resulting graph is $n-5$. This is case 9 in the proof below. }
	\label{fig:delete2}
\end{figure}

\begin{proof}
We will first consider the general case where $p \notin \{2i, 2j, j-i, j+i\}$. The vertices whose degree are potentially affected by the deletion of $is, js$ have signature values of $(p-j)s, (p+j)s, (p-i)s$ and $(p+i)s$. Note that $(p-i)s<ns, (p-j)s<ns, (p+j)s>0$ and $(p+i)s>0$ always hold. Thus, we will discuss the relationship of $(p-i)s$ and $(p-j)s$ with 0, $(p+i)s$ and $(p+j)s$ with $ns$ to determine whether these four values are in the signature or affected by the deletion. See Figures~\ref{fig:delete2_again} and~\ref{fig:delete2} for examples. 
\begin{enumerate}
\item $p-j>0$ and $p+j\leq n\Rightarrow j<p\leq n-j$\\
Since $j>i$, we know $p-i>0$ and $p+i\leq n$. Then $(p+j)s, (p-j)s, (p+i)s$ and $(p-i)s$ are all in the signature. Thus, 6 edges adjacent to $ps$: $(is,ps),((p-i)s,ps),((p+i)s,ps), (js,ps),((p-j)s,ps),((p+j)s,ps)$ are deleted and $\deg(ps)=n-1-6=n-7$. An example is illustrated in Figure~\ref{fig:delete2_again}.

\item $p-j>0,\ p+j>n$ and $p+i\leq n\Rightarrow \max(j,\ n-j)<p\leq n-i$ \\
Since $j>i$ and $p-j>0$, we know $p-i>0$. Then $(p-j)s, \ (p+i)s$ and $(p-i)s$ are in the signature. Thus, 5 edges adjacent to $ps$: $(is,ps),((p-i)s,ps),((p+i)s,ps), (js,ps),((p-j)s,ps)$ are deleted and $\deg(ps)=n-1-5=n-6$. 

\item $p-j>0,\ p+j>n$ and $p+i>n\Rightarrow p>\max(j,\ n-i)$ \\
Since $j>i$ and $p-j>0$, we know $p-i>0$. Then $(p-j)s$ and $(p-i)s$ are in the signature. Thus, 4 edges adjacent to $ps$: $(is,ps),((p-i)s,ps), (js,ps),((p-j)s,ps)$ are deleted and $\deg(ps)=n-1-4=n-5$. 

\item $p+j\leq n,\ p-j\leq 0$ and $p-i> 0\Rightarrow i<p\leq \min(j,\ n-j)$ \\
Since $p+j\leq n$ and $j>i$, we know $p+i\leq n$. Then $(p-i)s, \ (p+j)s$ and $(p+i)s$ are in the signature. Thus, 5 edges adjacent to $ps$: $(is,ps),((p-i)s,ps),(js,ps), \ ((p+j)s,ps), \ ((p-j)s,ps)$ are deleted and $\deg(ps)=n-1-5=n-6$. 


\item $p+j\leq n,\ p-j\leq 0$ and $p-i\leq 0\Rightarrow p< \min(i,\ n-j)$ \\
Since $j>i$ and $p+j\leq n$, we know $p+i\leq n$. Then $(p+j)s$ and $(p+i)s$ are in the signature. Thus, 4 edges adjacent to $ps$: $(is,ps),((p+i)s,ps),\ (js,ps)$ and $((p+j)s, ps)$ are deleted and $\deg(ps)=n-1-4=n-5$.  

\item $p-j< 0, \ p-i>0, \ p+i>n \Rightarrow \max(i,n-i)<p< j$ \\
Since $p+i>n$, we know $p+j>n$. Only $(p-i)s$ is in the signature. Thus, 3 edges adjacent to $ps$ are deleted and $\deg(ps)=n-1-3=n-4$.

\item $p-i< 0, \ p+i>n \Rightarrow n-i<p< i$ \\
Since $p+i>n$ and $p-i< 0$, we know $p+j>n$ and $p-j< 0$. Then, only 2 edges adjacent to $ps$: $(is,ps),\ (js,ps)$ are deleted and $\deg(ps)=n-1-2=n-3$.

\item $p-i< 0, \ p+j>n, \ p+i\leq n \Rightarrow n-j<p\leq \min(i,n-i)$ \\
Since $p-i\leq 0$, we know $p-j\leq 0$. Only $(p+i)s$ is in the signature. Thus, 3 edges adjacent to $ps$ are deleted and $\deg(ps)=n-1-3=n-4$. 

\item $p-j\leq 0,\ p-i>0, \ p+j>n, \ p+i\leq n \Rightarrow \max(i,n-j)<p\leq \min(j,n-i)$ \\
Two values $(p-i)s$ and $(p+i)s$ are in the signature. Thus, 4 edges adjacent to $ps$ are deleted and $\deg(ps)=n-1-4=n-5$. An example is illustrated in Figure~\ref{fig:delete2}.
\end{enumerate}
Combining all the cases, we have the general result for $p \notin \{2i, 2j, j-i, j+i\}$. If $p=2i, 2j$, we need to add $\epsilon_i$, $\epsilon_j$ which are defined in the previous proof. If $p=i+j$, then $(p-i)s$ and $js$ are the same vertex, and $(p-j)s$ and $is$ are the same vertex. So we overcounted the number of deleted edges by 2 and this defines $\gamma_{ij}$. If $p=j-i$, then $js$ and $(p-i)s$ are the same vertex, which defines $\beta_{ij}$. 
\end{proof}

\subsection{Structure Lemmas}
Recall that $s_1$ is the smallest element in the proper monograph signature $S = S(G)$, and $k$ is the codegree of $G$. Then \cite{bloom1984acg} make the following observation:

\begin{obs}\label{obs:smallest_arithmetic}[Fact 3.1]
The elements of $S$ can be partitioned into at most $k+1$ arithmetic progressions with common difference $s_1$. Furthermore, at least one such arithmetic progression has at least $\frac{n}{k+1}$ terms. 
\end{obs}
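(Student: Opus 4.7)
The plan is to realize the claimed partition by an explicit equivalence relation on $S$ and then count its classes by pairing them with non-neighbors of $\pi(s_1)$ in $G$. Specifically, I would declare $s_i$ and $s_j$ equivalent when they lie in the same maximal $s_1$-progression, i.e. when one can pass from $s_i$ to $s_j$ by repeatedly adding or subtracting $s_1$ while staying inside $S$. By construction, the equivalence classes are maximal arithmetic progressions of common difference $s_1$ and they partition $S$. This takes care of the form of the partition; what remains is to bound the number of classes.

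Next I would count the classes by identifying each with its smallest element. A value $s_i \in S$ is the smallest member of its class if and only if $s_i - s_1 \notin S$. Since $S$ is a proper monograph signature we have $0 \notin S$ and $s_1 > 0$, so $s_1$ is automatically such a minimum (as $s_1 - s_1 = 0 \notin S$). For every other class minimum $s_i > s_1$, the condition $s_i - s_1 \notin S$ is, by the autograph/monograph definition, exactly the statement that the edge $(\pi(s_1), \pi(s_i))$ is absent from $G$, because $|s_i - s_1| = s_i - s_1$. Hence the class minima other than $s_1$ are in one-to-one correspondence with the non-neighbors of $\pi(s_1)$ in $G$, of which there are at most $k$ by the definition of codegree. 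Adding the class of $s_1$ itself, the total number of classes is at most $k+1$.

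The second assertion then follows from a pigeonhole step: the $n$ elements of $S$ have been distributed into at most $k+1$ classes, so at least one class has size at least $\lceil n/(k+1) \rceil \geq n/(k+1)$.

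I do not anticipate a serious obstacle here; the only subtlety is to be careful that $s_1$ itself is correctly accounted for as a class minimum (justified by $0 \notin S$) and to recognize that for $s_i > s_1$ the absolute value in the autograph definition collapses to $s_i - s_1$, so ``missing from $S$'' and ``not adjacent to $\pi(s_1)$ in $G$'' really are the same condition. Once that identification is in hand, the bound on the number of classes and the pigeonhole conclusion are immediate.
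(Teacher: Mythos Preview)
Your argument is correct. Note, however, that the paper does not actually supply a proof of this observation; it is quoted as Fact~3.1 from \cite{bloom1984acg}. Your approach---identifying each maximal $s_1$-progression with its least element and observing that every such minimum other than $s_1$ itself yields a non-edge at $\pi(s_1)$---is exactly the idea the paper uses when it \emph{does} prove the companion result, Observation~\ref{obs:neg_structure}, for negative signature values (there the maximal progressions are tagged by their \emph{largest} element, which yields a non-edge at $\pi(s_i)$). So your proof is in the same spirit as the paper's treatment of the analogous fact, and nothing is missing.
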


Note that the arithmetic progressions can be as short as one element, and one of them begins with $s_1$. 
\cite{bloom1984acg} continue by setting $l$ equal to the number of terms in the longest such arithmetic progression. 

\begin{obs}\label{obs:missing_from_[0]}[Fact 3.2]
Among the numbers $s_1, 2s_1, ..., (l-1) s_1$, at most $k$ are missing from $S$. 
\end{obs}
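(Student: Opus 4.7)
The plan is to apply the codegree bound to a single, carefully chosen vertex in the longest arithmetic progression. Let $P = \{as_1, (a+1)s_1, \ldots, (a+l-1)s_1\} \subseteq S$ denote the longest arithmetic progression of common difference $s_1$ guaranteed by Observation~\ref{obs:smallest_arithmetic}, and let $v = \pi((a+l-1)s_1)$ be the vertex corresponding to its largest element.

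Next, I would examine the edges between $v$ and each of the other $l-1$ vertices $\pi((a+i)s_1)$, for $0 \leq i \leq l-2$. By the defining property of an autograph, such an edge is present in $G$ if and only if
$$
\bigl|(a+l-1)s_1 - (a+i)s_1\bigr| = (l-1-i)\,s_1 \in S.
$$
As $i$ ranges over $\{0, 1, \ldots, l-2\}$, the quantity $(l-1-i)s_1$ ranges over the set $T = \{s_1, 2s_1, \ldots, (l-1)s_1\}$ exactly once. Hence the number of non-neighbors of $v$ among $\{\pi((a+i)s_1) : 0 \leq i \leq l-2\}$ equals precisely the number of elements of $T$ that are absent from $S$.

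Finally, since the codegree of $G$ is $k$, the vertex $v$ has at most $k$ non-neighbors in all of $V(G) \setminus \{v\}$, and in particular at most $k$ non-neighbors within $P \setminus \{v\}$. Combining with the identification above yields the stated bound.

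The only subtlety to watch is the book-keeping of whether $v$'s non-neighbors might lie outside $P$; this doesn't hurt the argument, since non-neighbors \emph{outside} $P$ only make the $k$-bound looser with respect to $T$. The main conceptual step is recognizing that the multiples of $s_1$ up to $(l-1)s_1$ are exactly the differences realized between $v$ and the other members of $P$, which is what makes the longest arithmetic progression the right object to look at.
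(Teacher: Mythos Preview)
Your argument is correct and matches the natural approach; the paper does not give an explicit proof of this observation (it cites it from \cite{bloom1984acg}), but its proof of the closely related Observation~\ref{obs:neg_0} proceeds identically except that it anchors at the \emph{smallest} element of the $l$-term progression rather than the largest---an inessential choice, since either endpoint realizes the same set of differences $\{s_1,2s_1,\ldots,(l-1)s_1\}$ against the rest of the progression.

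One small notational slip: writing $P=\{as_1,(a+1)s_1,\ldots,(a+l-1)s_1\}$ presumes the terms are integer multiples of $s_1$, which Observation~\ref{obs:smallest_arithmetic} does not guarantee; the progression should be $\{a,a+s_1,\ldots,a+(l-1)s_1\}$ for some $a\in S$. This does not affect your computation, since the $a$'s cancel in the differences.
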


While the previous observations hold for proper monographs only, it is natural to extend this logic to all monographs. The fact that for any signature values $a > 0$ and $b < 0$, an edge exists between them if and only if $|a-b| = a + |b| \in S$ leads to the following companion observation for negative signature values. 

\begin{obs}\label{obs:neg_structure}
For each negative signature value $s_i \in S$, where $\deg(s_i)\neq 0$, the elements of $S^+$ can be partitioned into at most $k$ arithmetic progressions with common difference $|s_i|$. Furthermore, at least one such progression has at least $\frac nk$ terms. 
\end{obs}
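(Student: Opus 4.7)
The plan is to mimic the structure of the proof of Observation~\ref{obs:smallest_arithmetic}, but measured relative to the vertex $\pi(s_i)$ rather than $\pi(s_1)$. The key algebraic fact I would exploit at the outset is that for any $s_j \in S^+$, the difference $|s_j - s_i| = s_j + |s_i|$ is strictly positive, so $\pi(s_i)$ is adjacent to $\pi(s_j)$ in $G$ if and only if $s_j + |s_i| \in S$, and equivalently if and only if $s_j + |s_i| \in S^+$. This reduces every question about the edges incident to $\pi(s_i)$ to an algebraic shift-by-$|s_i|$ question inside $S^+$.

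Next, I would partition $S^+$ into maximal arithmetic progressions with common difference $|s_i|$: chains of the form $y,\, y+|s_i|,\, \dots,\, y+(r-1)|s_i|$ lying entirely in $S^+$ but with $y - |s_i|$ and $y + r|s_i|$ outside $S^+$. For each such maximal progression, the right endpoint $y + (r-1)|s_i|$ witnesses a non-adjacency between $\pi(s_i)$ and $\pi(y + (r-1)|s_i|)$, because $y + r|s_i| \notin S^+$. Right endpoints of distinct maximal progressions are distinct, so the number of progressions is bounded above by the number of non-neighbors of $\pi(s_i)$ inside $S^+$. Since $G$ has codegree $k$, the vertex $\pi(s_i)$ has at most $k$ non-neighbors in total, yielding the stated bound of at most $k$ progressions. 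The hypothesis $\deg(s_i)\neq 0$ enters here to rule out the degenerate case in which every element of $S^+$ is its own singleton progression yet the codegree counting still has to be nontrivial.

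For the second assertion, a straightforward pigeonhole on a partition of $|S^+| = n - m$ elements into at most $k$ progressions yields a progression of length at least $\tfrac{n-m}{k}$. To upgrade this to $\tfrac{n}{k}$, I would invoke Corollary~\ref{cor:neg_sig_values}, which constrains $m \leq n-1-\deg(s_n)$ and hence forces $m$ to be small whenever $s_n$ has large enough degree, so that $|S^+|$ is essentially all of $S$. The main obstacle I anticipate is precisely this reconciliation of constants: pigeonhole naturally produces $\tfrac{|S^+|}{k}$, and the cleanest version of the claim would instead read $\tfrac{|S^+|}{k}$. Closing the gap to $\tfrac{n}{k}$ should follow from the structural bounds on $m$ already derived in the preceding corollaries rather than from any essentially new idea; no deeper machinery beyond the edge-labeling equivalence and codegree counting above is required.
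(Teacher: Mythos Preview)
Your argument for the first assertion is correct and matches the paper's proof exactly: partition $S^+$ into maximal arithmetic progressions with common difference $|s_i|$, observe that the largest element $a$ of each progression satisfies $a+|s_i|\notin S$ and is therefore a non-neighbor of $\pi(s_i)$, and bound the number of progressions by the codegree $k$.

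You are more careful than the paper on the second assertion; the paper's proof does not address it at all, leaving it implicitly to pigeonhole. You correctly note that pigeonhole on a partition of $S^+$ into at most $k$ blocks yields a progression of length at least $\tfrac{|S^+|}{k}=\tfrac{n-m}{k}$, not $\tfrac{n}{k}$. However, your proposed fix via Corollary~\ref{cor:neg_sig_values} does not close the gap: that corollary gives only $m\le k$, hence a progression of length at least $\tfrac{n-k}{k}=\tfrac{n}{k}-1$. This shortfall is not a defect in your reasoning but rather reflects a minor imprecision in the statement itself; the clean and provable bound is $\tfrac{n-m}{k}$, exactly as you suspected, and the second assertion is in any case never invoked later in the paper.

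One small remark: your explanation of the hypothesis $\deg(s_i)\neq 0$ is not quite right. Neither your argument nor the paper's uses it. If $\deg(s_i)=0$ then $k\ge n-1\ge |S^+|$ and the claim holds trivially, so the hypothesis plays no role in the counting.
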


\begin{proof}
Consider any maximal arithmetic progression with common difference $|s_i|$, where $a \geq 0$ is the largest member of this arithmetic progression. Then $|a-s_i| = a+|s_i|$ is not in $S$, and hence $a$ is not adjacent to $s_i$. Thus, the number of such arithmetic progressions is no more than the codegree $k$ since $k$ is defined as the maximum number of vertices which any vertex including $s_i$ is not adjacent to. 
\end{proof}


\begin{obs}\label{obs:neg_0}
If $s_1$ is the only negative signature value in $S$ and $0 \notin S$, then among the numbers $|s_1|, 2 |s_1|,...,(l-1)|s_1|$, at most $k-1$ are missing from $S$. 
\end{obs}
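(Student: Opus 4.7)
The plan is to replay the edge-counting argument of Observation \ref{obs:missing_from_[0]}, but to extract from the single negative signature value $s_1$ one guaranteed extra non-neighbor, which tightens the bound from $k$ to $k-1$. By Observation \ref{obs:neg_structure} applied with $s_i = s_1$, the set $S^+$ decomposes into at most $k$ maximal arithmetic progressions of common difference $|s_1|$. Let $A = \{a_0,\ a_0 + |s_1|,\ \ldots,\ a_0 + (l-1)|s_1|\} \subseteq S^+$ be the longest such AP (so $|A| = l$), and denote its largest element by $b = a_0 + (l-1)|s_1|$.

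The crux is to show that $b$ is forced to be a non-neighbor of $s_1$. The candidate edge label is $|b - s_1| = b + |s_1| = a_0 + l|s_1|$, and by maximality of $A$, this value cannot lie in $S^+$, for otherwise $A$ would extend to a longer AP with common difference $|s_1|$. Since $a_0 + l|s_1|$ is positive, it cannot lie in $S^-$ either, so it is not in $S$ at all and $(b,s_1) \notin E(G)$.

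Next I would count the remaining $l-1$ potential edges from $b$ into $A$. For each $j \in \{1, 2, \ldots, l-1\}$, the edge from $b$ to $a_0 + (l-1-j)|s_1| \in A \setminus \{b\}$ carries label exactly $j|s_1|$, so this edge exists iff $j|s_1| \in S$. Hence the number of values $|s_1|, 2|s_1|, \ldots, (l-1)|s_1|$ missing from $S$ equals the number of elements of $A \setminus \{b\}$ that are non-neighbors of $b$ in $G$. Since the codegree of $G$ is $k$, the vertex $b$ has at most $k$ non-neighbors in total; with $s_1 \notin A$ already accounting for one of them, at most $k-1$ non-neighbors of $b$ can lie in $A$, yielding the stated bound.

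I do not anticipate a substantive obstacle: the only conceptual move is recognizing that the largest element of the longest AP is automatically non-adjacent to $s_1$ by the very maximality used to build the AP partition in Observation \ref{obs:neg_structure}, and everything else is bookkeeping matching the proof of Observation \ref{obs:missing_from_[0]}. The hypothesis $0 \notin S$ plays a hygienic role, ensuring $S^+$ consists entirely of strictly positive values so the AP partition and difference computations do not degenerate through the value zero.
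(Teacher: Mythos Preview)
Your argument is correct and matches the paper's strategy: pick an extremal element of the longest $|s_1|$-progression, exhibit $s_1$ as a forced non-neighbor of that element, and then count the remaining non-neighbors inside the progression. In fact your choice of the \emph{largest} element $b$ is the right one; the paper instead takes the smallest element $a$ and asserts $a$ is non-adjacent to $s_1$, but since $|a-s_1|=a+|s_1|$ is the second term of the progression (hence in $S$ whenever $l\ge 2$), that assertion fails as stated, so your version actually repairs a small slip in the paper's own proof.
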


\begin{proof}

Assume that $s_1$ is in the longest arithmetic progression with $l$ terms and common difference $|s_1|$. Then, the $l$-term arithmetic progression is  $\{s_1, 0, |s_1|, \ldots,$ $(l-2) |s_1|\}$. This contradicts that $0 \notin S$, and so $s_1$ is not in the $l$-term arithmetic progression. Suppose $a$ is the smallest term of an $l$-term arithmetic progression $a, a + |s_1|, ..., a + (l-1) |s_1|$. By definition of codegree $k$, vertex $a$ fails to be adjacent to at most $k$ vertices, not including $a$, but specifically including $s_1$ which is not in the $l$-term arithmetic progression. Each vertex in this sequence which is adjacent to $a$ implies the existence of the label $a + t|s_1|-a = t |s_1|$.  
\end{proof}


\section{$M_6$ is the smallest non-autograph}
\label{sec:m6}

Observe that vertices in $M_6$ with the same degree are neighbors and that all vertices of $M_6$ have degree 3 or 4. Furthermore, the largest clique contained in $M_6$ is of size 4. We first eliminate the possibility that zero is in a signature for $M_6$. 

\begin{lemma} \label{lemma: m6_no_zero}
A set containing $0$ cannot be the signature for $M_6$.
\end{lemma}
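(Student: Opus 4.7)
The plan is to argue by contradiction: suppose $S$ is a signature for $M_6$ with $0\in S$. The key facts about $M_6$ are that its degree sequence is $(3,3,4,4,4,4)$ and that its two degree-3 vertices are adjacent. Let $p$ be the number of positive and $m=5-p$ the number of negative values in $S$. By Observation~\ref{obs:deg_of_0} the vertex labeled $0$ is adjacent to every positive-labeled vertex, so $p\le 4$. By Lemma~\ref{lemma:kth-largest} applied to $s_n$, the vertex $s_n$ has no negative neighbors, so $\deg(s_n)\le p$, and minimum degree $3$ forces $p\ge 3$. Hence $(p,m)\in\{(4,1),(3,2)\}$, and I rule out each case.

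In the case $(p,m)=(4,1)$, write $S=\{s_1,0,s_3,s_4,s_5,s_6\}$ with $s_1<0<s_3<s_4<s_5<s_6$. The max-degree constraint on $0$ forces $-s_1\notin S$ and $\deg(0)=4$. Vertex $s_1$ cannot connect to $0$ (since $-s_1\notin S$) or to $s_6$ (since $s_6-s_1>s_6$), leaving at most three potential neighbors $\{s_3,s_4,s_5\}$; combined with $\deg(s_1)\ge 3$ this forces $\deg(s_1)=3$ and then that $s_3,s_4,s_5,s_6$ form an arithmetic progression with common difference $|s_1|$. Since $s_6-s_5=|s_1|=-s_1\notin S$, the vertex $s_6$ has at most three potential neighbors $\{0,s_3,s_4\}$; minimum degree forces all three, so $s_6-s_4=2|s_1|\in S$. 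Checking the options (each of $s_4,s_5,s_6$ leads to a contradiction with $s_3>0$ or with $-s_1\notin S$), the only consistent choice is $2|s_1|=s_3$, yielding up to scaling the candidate $S=\{-1,0,2,3,4,5\}$. A direct edge count on this signature gives only $10$ edges, whereas $M_6$ has $11$, so the candidate cannot be $M_6$.

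In the case $(p,m)=(3,2)$, write $S=\{s_1,s_2,0,s_4,s_5,s_6\}$. Lemma~\ref{lemma:kth-largest} confines the neighbors of $s_6$ to $\{0,s_4,s_5\}$, and minimum degree forces all three to be neighbors; tracking $s_6-s_5\in S$ then forces $s_6=s_4+s_5$. Since $\deg(0)=3+|\{-s_1,-s_2\}\cap S|\le 4$, at most one of $-s_1,-s_2$ lies in $S$, splitting into three subcases. If neither lies in $S$, then $s_1$ loses both edges to $0$ and $s_6$, so $\deg(s_1)\le 3$ while $s_1$ needs degree $4$ (since $0$ and $s_6$ are already the two degree-3 vertices), a contradiction. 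If $-s_2\in S$ but $-s_1\notin S$, the bound $\deg(s_1)\le 3$ forces $\deg(s_1)=3$ and in particular $s_5-s_1\in S$; but this value exceeds $s_5$, so it equals $s_6$, giving $-s_1=s_6-s_5=s_4\in S$, contradiction. If $-s_1\in S$ but $-s_2\notin S$, then symmetrically $\deg(s_2)\le 3$, so $\deg(s_2)=3$, and $\{s_2,s_6\}$ must be the two degree-3 vertices; they should be adjacent in $M_6$, but $|s_2-s_6|=s_6+|s_2|>s_6$ is not in $S$, a contradiction.

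The main obstacle will be the proliferation of subcases and the careful bookkeeping of which absolute differences can lie in $S$. The organizing idea throughout is that the three extremal vertices --- the maximum $s_n$, the minimum $s_1$, and $0$ itself --- have sharply restricted neighborhoods via Observation~\ref{obs:deg_of_0} and Lemma~\ref{lemma:kth-largest}; combining these restrictions with the degree sequence of $M_6$ and the adjacency of its two degree-3 vertices collapses every branch.
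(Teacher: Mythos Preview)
Your proof is correct and follows the same high-level organization as the paper: split on the number $m$ of negative entries and rule out each case. The tactics inside the cases differ, though. For $m=1$, the paper observes (via Lemma~\ref{lemma:kth-largest}) that $s_1$ is non-adjacent to both $0$ and $s_6$; hence $s_5$, wherever it sits in $M_6$, is adjacent to both $s_1$ and $s_6$, which forces $s_6-s_5=|s_1|\in S$, contradicting $|s_1|\notin S$. You instead push on to pin down the unique candidate $\{-1,0,2,3,4,5\}$ and count edges; this works, but you could have stopped earlier: once $s_6-s_5=|s_1|\notin S$, you have $\deg(s_6)\le 3$, and since $\deg(s_1)=3$ already, the two degree-$3$ vertices $s_1,s_6$ would have to be adjacent in $M_6$ --- yet they are not. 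For $m=2$, the paper notes directly that both $s_1$ and $s_2$ are non-adjacent to both $0$ and $s_6$, which already forces three vertices of degree at most $3$; your three-way split on which of $-s_1,-s_2$ lie in $S$ reaches the same end by a slightly longer road. In short: your argument is sound, the paper's is the same plan executed with quicker structural shortcuts.
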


\begin{proof}
Let $S=\{s_1, s_2, s_3, s_4, s_5, s_6\}$. Assume by contradiction that $0 \in S$ and $S$ is a signature for $M_6$. By Corollary~\ref{cor:neg_sig_values}, $M_6$ has at most $m=2$ negative signature values. 

\begin{enumerate}

	\item Case: $m=1$. That is, $s_1 <0$ and $s_2=0$. Note that by Observation~\ref{obs:deg_of_0}, $\deg(s_2) = 4$, and in particular, $s_2$ is adjacent to $s_3, s_4, s_5$, and $s_6$. Thus, $s_2$ is not adjacent to $s_1$, and so the only possible signature has the configuration depicted in Figure~\ref{fig:m6_0a}. Since $s_2$ and $s_1$ are not connected, it follows that $|s_2 - s_1| = s_2 + |s_1| = |s_1| \notin S$. But then, regardless of which vertex is labeled $s_5$, it must be adjacent to both $s_1$ and $s_6$. The first condition requires 
	$$
		|s_5 - s_1| = s_5 + |s_1| \in S \Rightarrow s_5 + |s_1| = s_6 \Rightarrow s_6 - s_5 = |s_1| \,.
	$$
	But the second condition then implies $|s_6 - s_5| = |s_1| \in S$, contradicting the fact that $|s_1| \notin S$. 


\begin{figure}
	\centering
	\subfigure[Case 2(a)]{
		\tikzset{
	node/.style={circle,inner sep=1mm,minimum size=0.2cm,draw,very thick,black,fill=blue!20,text=black},
	nondirectional/.style={thin,black},
}

\begin{tikzpicture}[yscale=1]
	\node [node] (v0) at (0, 1)	{$s_1$};
	\node [node] (v1) at (1, 0.5)	{?};
	\node [node] (v2) at (1, -0.5)	{$0$};
	\node [node] (v3) at (0, -1)	{?};
	\node [node] (v4) at (-1, -0.5)	{$s_6$};
	\node [node] (v5) at (-1, 0.5)	{?};
	\path [nondirectional] (v0) edge (v1);
	\path [nondirectional] (v0) edge (v5);
	\path [nondirectional] (v0) edge (v3);
	\path [nondirectional] (v3) edge (v4);
	\path [nondirectional] (v3) edge (v2);
	\path [nondirectional] (v1) edge (v2);
	\path [nondirectional] (v1) edge (v4);
	\path [nondirectional] (v1) edge (v5);
	\path [nondirectional] (v2) edge (v4);
	\path [nondirectional] (v2) edge (v5);
	\path [nondirectional] (v4) edge (v5);
\end{tikzpicture}
		\label{fig:m6_0a}
	} 
	\qquad		
	\subfigure[Case 2(b)]{
		\tikzset{
	node/.style={circle,inner sep=1mm,minimum size=0.2cm,draw,very thick,black,fill=blue!20,text=black},
	nondirectional/.style={thin,black},
}

\begin{tikzpicture}[yscale=1]
	\node [node] (v0) at (0, 1)	{$0$};
	\node [node] (v1) at (1, 0.5)	{?};
	\node [node] (v2) at (1, -0.5)	{$s_2$};
	\node [node] (v3) at (0, -1)	{$s_6$};
	\node [node] (v4) at (-1, -0.5)	{$s_1$};
	\node [node] (v5) at (-1, 0.5)	{?};
	\path [nondirectional] (v0) edge (v1);
	\path [nondirectional] (v0) edge (v5);
	\path [nondirectional] (v0) edge (v3);
	\path [nondirectional] (v3) edge (v4);
	\path [nondirectional] (v3) edge (v2);
	\path [nondirectional] (v1) edge (v2);
	\path [nondirectional] (v1) edge (v4);
	\path [nondirectional] (v1) edge (v5);
	\path [nondirectional] (v2) edge (v4);
	\path [nondirectional] (v2) edge (v5);
	\path [nondirectional] (v4) edge (v5);
\end{tikzpicture}
		\label{fig:m6_0b}
	}
	\caption{Possible configurations for $M_6$ discussed in Lemma~\ref{lemma: m6_no_zero}.}
	\label{fig:m6_0ab}
\end{figure}
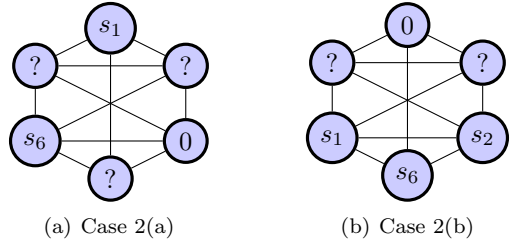



	\item Case: $m=2$. In this case $s_1 < s_2 < 0$ and $s_3 =0$. It now follows from Observation~\ref{obs:deg_of_0} that $deg(s_3) = 3$ and that in particular, $s_3$ is not adjacent to $s_1$ and $s_2$. However, it follows from Lemma~\ref{lemma:kth-largest} that $deg(s_6) =3$, and that $s_6$ is similarly not adjacent to $s_1$ and $s_2$. But this violates the structure of $M_6$, since then $s_1$ and $s_2$ are \emph{both} non-adjacent to $s_3$ and $s_6$, and no such vertex exists in $M_6$. This impossible situation is depicted in Figure \ref{fig:m6_0b}.

\end{enumerate}  
\end{proof}

\begin{lemma}\label{lemma:smallest_non_signature}
A set $P$ that contains an arithmetic progression of 5 non-negative numbers and a negative number is not a signature for $M_6$.
\end{lemma}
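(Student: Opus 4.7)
Suppose for contradiction that $P$ is a signature for $M_6$. Since $M_6$ is a monograph on six vertices, $|P| = 6$, and Lemma \ref{lemma: m6_no_zero} excludes $0$, so the hypothesis forces
\[
    P = \{s_1,\, a,\, a+d,\, a+2d,\, a+3d,\, a+4d\}, \qquad s_1 < 0,\ a, d > 0.
\]
The plan is to derive a contradiction from three structural features of $M_6$ visible in Figure \ref{fig:m6}: its degree sequence is $(3,3,4,4,4,4)$, its largest clique has size $4$, and the two non-neighbors of each degree-3 vertex are both degree-4 vertices.

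First I would examine $\pi(a+4d)$, the image of the largest signature value. Lemma \ref{lemma:kth-largest} (with $r=1$) forces $\pi(a+4d)$ to have no negative-signature neighbor, so it is not adjacent to $\pi(s_1)$ and every one of its neighbors has positive signature value. If $\deg(\pi(a+4d)) = 4$, then $\pi(a+4d)$ is adjacent to every other positive-signature vertex and Lemma \ref{lemma:max} applies, yielding $a+4d = a + (a+3d)$, hence $a = d$. But then every pairwise difference among the five positive signature values lies in $P$, so these five vertices induce a $K_5$ in $M_6$, contradicting the fact that the maximum clique in $M_6$ has size $4$.

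Consequently $\deg(\pi(a+4d)) = 3$, which means exactly three of the four differences $\{d, 2d, 3d, 4d\}$ belong to $P$. Since $s_1 < 0$, any such $kd$ must equal $a + id$ for some $0 \le i \le 4$, which forces $a$ to be an integer multiple of $d$, say $a = \ell d$. Intersecting $\{\ell, \ell+1, \ldots, \ell+4\}$ with $\{1, 2, 3, 4\}$ gives sizes $4, 3, 2, 1, 0$ when $\ell = 1, 2, 3, 4$ and $\ell \ge 5$ respectively, so the required count of $3$ forces $\ell = 2$. Thus $P = \{s_1,\, 2d,\, 3d,\, 4d,\, 5d,\, 6d\}$ with $d \notin P$, and the two non-neighbors of the degree-3 vertex $\pi(6d)$ are $\pi(5d)$ (because $|6d - 5d| = d \notin P$) and $\pi(s_1)$.

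The structural property of $M_6$ now demands that $\pi(5d)$ be a degree-4 vertex. However, $\pi(5d)$ is adjacent to $\pi(2d)$ and $\pi(3d)$ (differences $3d, 2d \in P$), non-adjacent to $\pi(4d)$ and $\pi(6d)$ (difference $d \notin P$), and adjacent to $\pi(s_1)$ iff $5d + |s_1| \in P$; the only way for $5d + |s_1|$ to land in $\{2d, 3d, 4d, 5d, 6d\}$ is $|s_1| = d$, in which case $\deg(\pi(5d)) = 3$, and otherwise $\deg(\pi(5d)) = 2$. Neither possibility equals $4$, completing the contradiction. The main obstacle is the divisibility case analysis that pins down $a = 2d$; once that is in place, the final degree calculation for $\pi(5d)$ is routine.
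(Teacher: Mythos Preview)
Your argument is correct. The two case splits are actually equivalent---the paper divides on whether the common difference $d$ lies in $P$, and you divide on whether $\deg(\pi(a+4d))$ equals $4$ or $3$; since $d\in P$ iff $a=d$ iff all four of $d,2d,3d,4d$ lie in $P$, these amount to the same dichotomy. In the first case both you and the paper land on the $K_5$ contradiction, though you reach $a=d$ via Lemma~\ref{lemma:max} whereas the paper reads it off directly from $d\in P$.

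The genuine divergence is in the second case. The paper never determines $a$: it simply computes the difference sets of the three middle vertices $a+d$, $a+2d$, $a+3d$ and observes (after first forcing $2d\in P$) that each has degree at most $3$, giving three vertices of degree $\le 3$ in a graph that has only two. You instead run a divisibility argument to pin down $a=2d$ exactly, then examine the single vertex $\pi(5d)$ and invoke the finer structural fact that every non-neighbor of a degree-$3$ vertex in $M_6$ has degree $4$. The paper's route is a bit lighter---no need for Lemma~\ref{lemma:max}, no arithmetic to identify $\ell$, and only the coarse degree count---while yours extracts more structure (the signature is completely determined up to $s_1$) at the cost of that extra bookkeeping.
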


\begin{proof}
Assume by contradiction that $P=\{p,t,t+s,t+2s,t+3s,t+4s\}$ is a signature for $M_6$, where $t>0$, $s>0$ and $p<0$. 
We consider two cases based on whether $s$ is in $P$ or not.
\begin{enumerate}
\item Case: $s\in P$. Since $t+s \geq s > 0$, $s$ can only equal $t$ or $t+s$. If $s = t$, $ P= \{p,s,2s,3s,4s,5s\}$. If $s = t + s$, $P = \{p,0,s,2s,3s,4s\}$. The vertices with signature values $s,2s,3s,4s,5s$ (if $s=t$) or $0,s,2s,3s,4s$ (if $s = t + s$) form a clique of size 5 in $M_6$, which contradicts that the clique number of $M_6$ is 4. 

\item Case: $s\notin P$. Consider the difference set of $t+2s$. 
$$D(t+2s)=\{t+2s-p,2s,s,s,2s\}$$
Since $s\notin P$, we must have that $2s\in P$ since otherwise $\deg(t+2s) \leq 1$, and $M_6$ does not contain such a vertex. Thus, $\deg(t+2s) = 3$. Similarly, consider the difference sets of $t+3s$ and $t+s$. 
$$D(t+3s)=\{t+3s-p,3s,2s,s,s\}$$
$$D(t+s)=\{t+s-p,s,s,2s,3s\}$$
Since $s\notin P$, $\deg(t+3s)$ and $\deg(t+s)$ cannot be 4. Then, three vertices would have degree of 3: $t+3s,t+2s,t+s$. This contradicts that only two vertices of $M_6$ have degree of 3. 
\end{enumerate}
Hence, $P$ is not a signature for $M_6$.


\end{proof}

\begin{theorem}
$M_6$ is a non-autograph.
\end{theorem}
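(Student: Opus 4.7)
My strategy is to assume for contradiction that $S = \{s_1 < \cdots < s_6\}$ is a signature for $M_6$ and then rule out every sign pattern. The graph $M_6$ has degree sequence $(3,3,4,4,4,4)$, clique number $4$, and $11$ edges, so its minimum degree is $\delta = 3$ and its codegree is $k = n - 1 - \delta = 2$. First I will invoke Lemma~\ref{lemma: m6_no_zero} to rule out $0 \in S$. Combined with Corollary~\ref{cor:neg_sig_values} applied to $\deg(s_6) \geq \delta = 3$, the number $m$ of negative signature values satisfies $m \leq 2$, and the argument splits into the three cases $m \in \{0, 1, 2\}$.

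For $m = 1$ my plan is to force $S^+ = \{s_2, \ldots, s_6\}$ to be an arithmetic progression of length $5$ with common difference $|s_1|$, whereupon Lemma~\ref{lemma:smallest_non_signature} delivers the contradiction. Observation~\ref{obs:neg_structure} with $k = 2$ partitions $S^+$ into at most two APs of common difference $|s_1|$ whose longest length is $l \geq n/k = 3$; the case $l = 5$ is immediate. For $l \in \{3, 4\}$ I will combine Observation~\ref{obs:neg_0}---which asserts that at most one of $|s_1|, 2|s_1|, \ldots, (l-1)|s_1|$ is missing from $S$---with the degree sequence of $M_6$ to exclude the partition shapes $(4,1)$ and $(3,2)$ via a short case check.

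For $m = 2$, write $s_1 < s_2 < 0 < s_3 < \cdots < s_6$. Corollary~\ref{cor:neg_sig_values} forces $\deg(s_6) \leq n - 1 - m = 3$, so $\deg(s_6) = 3$ and $s_6$ is adjacent exactly to $\{s_3, s_4, s_5\}$. Lemma~\ref{lemma:max} then yields the algebraic relations $s_6 = s_3 + s_5 = 2 s_4$, so $s_4 = s_6/2$ and $s_5 = s_6 - s_3$. Since in $M_6$ the neighborhood of a degree-$3$ vertex induces exactly one edge, exactly one of $s_5 - s_3$, $s_5 - s_4$, $s_4 - s_3$ must lie in $S$; because the last two are equal, the only admissible choice is $s_5 - s_3 \in S$, and I will then enumerate the few possible values of $s_5 - s_3$ within $S$ to reach a contradiction. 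For $m = 0$, Lemma~\ref{lemma:max} is unavailable because no vertex of $M_6$ has degree $n - 1 = 5$. Instead, Observation~\ref{obs:smallest_arithmetic} partitions $S$ into at most three APs of common difference $s_1$ with longest length $l \geq 2$, and Observation~\ref{obs:missing_from_[0]} bounds by $k = 2$ the gaps among $s_1, 2s_1, \ldots, (l-1)s_1$. A direct count, combined with the facts that $M_6$ has $11$ edges, minimum degree $3$, and clique number $4$, restricts the number of edges contributed by each AP to the induced subgraph on its vertices and thereby restricts the feasible partition shapes $(l_1, l_2, l_3)$ with $l_1 + l_2 + l_3 = 6$ to a short finite list, which I rule out case by case.

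I expect the $m = 1$ case to be the main obstacle. It is the only case in which the structural observations alone do not immediately force the hypothesis of Lemma~\ref{lemma:smallest_non_signature}, and it demands the most careful juggling of the AP partition of $S^+$, the missing-initial-multiples bound of Observation~\ref{obs:neg_0}, and the specific edge pattern of $M_6$ to exclude the partition shapes $(4,1)$ and $(3,2)$.
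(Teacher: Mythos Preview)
Your top-level decomposition (rule out $0\in S$ via Lemma~\ref{lemma: m6_no_zero}, then split on $m\in\{0,1,2\}$) matches the paper exactly, and your plan is sound, but your tactics inside the cases diverge from the paper's in two places worth noting. For $m=2$ the paper does \emph{not} invoke the Maximum Element Lemma or analyze the neighborhood of $s_6$; it simply observes that since $\deg(s_6)=3$ and $s_6$ misses $s_1,s_2$, both $s_1$ and $s_2$ have degree~$4$ and hence are adjacent to $s_5$, forcing $s_5-s_1=s_5-s_2=s_6$, an immediate contradiction. Your route through $s_6=s_3+s_5=2s_4$ and the single induced edge among $\{s_3,s_4,s_5\}$ also works but costs several extra steps. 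For $m=1$ the paper instead splits on $\deg(s_1)$: if $\deg(s_1)=4$ a direct chain $s_{i}-s_1=s_{i+1}$ forces the length-$5$ progression, while if $\deg(s_1)=3$ the Maximum Element Lemma applies to $s_6$ (now degree~$4$) and four subcases on the unique positive non-neighbor of $s_1$ are dispatched, three of them again via Lemma~\ref{lemma:smallest_non_signature}. Your alternative---partitioning $S^{+}$ into at most two $|s_1|$-progressions via Observation~\ref{obs:neg_structure} and bounding missing multiples via Observation~\ref{obs:neg_0}---is a legitimate different angle, but be aware that Observation~\ref{obs:neg_0} only tells you which of $|s_1|,2|s_1|,\dots,(l-1)|s_1|$ lie in $S$, not where the two progressions sit, so your ``short case check'' for shapes $(4,1)$ and $(3,2)$ will still require pinning down the starting points and is unlikely to be shorter than the paper's degree-based split. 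For $m=0$ your plan and the paper's coincide: both enumerate the partition shapes of $S$ into at most three $s_1$-progressions, and this is by far the longest case in the paper as well.
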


\begin{proof}
Assume by contradiction that $M_6$ has a signature $S$. As noted in Section~\ref{sec:prelim}, $M_6$ is a monograph. 

Since all vertices of $M_6$ have degree 3 or 4, $M_6$ has at most 2 negative signature values by Corollary~\ref{cor:neg_sig_values}. We consider three cases based on the number of negative signature values.
\begin{enumerate}
\item Case: $m=2$: Let $s_1<s_2<0<s_3<s_4<s_5<s_6$. Since $s_6$ is only adjacent to vertices with positive signature values, we know $\deg(s_6)=3$ and $\deg(s_1)=\deg(s_2)=4$, since $s_1,s_2$ are not adjacent to $s_6$ (see Figure~\ref{fig:m6_1}. Then, $s_1$ and $s_2$ are both adjacent to $s_5$. Since $s_5-s_1$ and $s_5-s_2$ are both greater than $s_5$ and are both in the signature, we have that
$$s_5-s_1=s_6\qquad s_5-s_2=s_6$$ 
This contradicts that $s_1<s_2$.

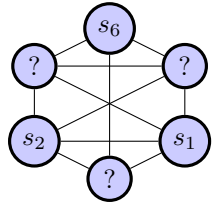
\begin{figure}[ht]
	\centering
	\tikzset{
	node/.style={circle,inner sep=1mm,minimum size=0.2cm,draw,very thick,black,fill=blue!20,text=black},
	nondirectional/.style={thin,black},
}

\begin{tikzpicture}[yscale=1]
	\node [node] (v0) at (0, 1)	{$s_6$};
	\node [node] (v1) at (1, 0.5)	{?};
	\node [node] (v2) at (1, -0.5)	{$s_1$};
	\node [node] (v3) at (0, -1)	{?};
	\node [node] (v4) at (-1, -0.5)	{$s_2$};
	\node [node] (v5) at (-1, 0.5)	{?};
	\path [nondirectional] (v0) edge (v1);
	\path [nondirectional] (v0) edge (v5);
	\path [nondirectional] (v0) edge (v3);
	\path [nondirectional] (v3) edge (v4);
	\path [nondirectional] (v3) edge (v2);
	\path [nondirectional] (v1) edge (v2);
	\path [nondirectional] (v1) edge (v4);
	\path [nondirectional] (v1) edge (v5);
	\path [nondirectional] (v2) edge (v4);
	\path [nondirectional] (v2) edge (v5);
	\path [nondirectional] (v4) edge (v5);
\end{tikzpicture}
	\caption{Possible configuration for $M_6$ with two negative signatures values ($s_1$ and $s_2$).}
	\label{fig:m6_1}
\end{figure}

\item Case: $m=1$: Let $s_1<0<s_2<s_3<s_4<s_5<s_6$. By Lemma~\ref{lemma:kth-largest}, $s_6$ is not adjacent to $s_1$. We consider two cases based on $\deg(s_1)$. 
	\begin{enumerate}
	\item Case: $\deg(s_1)=4$. Since $s_1$ is not adjacent to $s_6$, we know $\deg(s_6)=3$. Note that $s_1$ is adjacent to $s_2, s_3, s_4$ and $s_5$. Thus, $s_5-s_1>s_5$ has to be $s_6$ and $s_6-s_5=|s_1|$ (see Figure~\ref{fig:m6_2a}). Since $s_4<s_4-s_1<s_5-s_1=s_6$, we have that $s_4-s_1=s_5 \Rightarrow s_5-s_4=|s_1|$. Since $s_3<s_3-s_1<s_4-s_1=s_5$, we have that $s_3-s_1=s_4\Rightarrow s_4-s_3=|s_1|$. Since $s_2<s_2-s_1<s_3-s_1=s_4$, we have that $s_2-s_1=s_3\Rightarrow s_3-s_2=|s_1|$. Therefore, $s_2,s_3,s_4,s_5,s_6$ form an arithmetic progression of length 5 with common difference $|s_1|$.
	$$s_3-s_2=s_4-s_3=s_5-s_4=s_5-s_4=-s_1$$
    By Lemma~\ref{lemma:smallest_non_signature}, $S$ is not a signature for $M_6$.
    
\begin{figure}
	\centering
	\subfigure[Case 2(a)]{
		\tikzset{
	node/.style={circle,inner sep=1mm,minimum size=0.2cm,draw,very thick,black,fill=blue!20,text=black},
	nondirectional/.style={thin,black},
}

\begin{tikzpicture}[yscale=1]
	\node [node] (v0) at (0, 1)	{?};
	\node [node] (v1) at (1, 0.5)	{$s_1$};
	\node [node] (v2) at (1, -0.5)	{?};
	\node [node] (v3) at (0, -1)	{$s_6$};
	\node [node] (v4) at (-1, -0.5)	{?};
	\node [node] (v5) at (-1, 0.5)	{?};
	\path [nondirectional] (v0) edge (v1);
	\path [nondirectional] (v0) edge (v5);
	\path [nondirectional] (v0) edge (v3);
	\path [nondirectional] (v3) edge (v4);
	\path [nondirectional] (v3) edge (v2);
	\path [nondirectional] (v1) edge (v2);
	\path [nondirectional] (v1) edge (v4);
	\path [nondirectional] (v1) edge (v5);
	\path [nondirectional] (v2) edge (v4);
	\path [nondirectional] (v2) edge (v5);
	\path [nondirectional] (v4) edge (v5);
\end{tikzpicture}
		\label{fig:m6_2a}
	} 
	\qquad		
	\subfigure[Case 2(b)]{
		\tikzset{
	node/.style={circle,inner sep=1mm,minimum size=0.2cm,draw,very thick,black,fill=blue!20,text=black},
	nondirectional/.style={thin,black},
}

\begin{tikzpicture}[yscale=1]
	\node [node] (v0) at (0, 1)	{$s_1$};
	\node [node] (v1) at (1, 0.5)	{?};
	\node [node] (v2) at (1, -0.5)	{?};
	\node [node] (v3) at (0, -1)	{?};
	\node [node] (v4) at (-1, -0.5)	{$s_6$};
	\node [node] (v5) at (-1, 0.5)	{?};
	\path [nondirectional] (v0) edge (v1);
	\path [nondirectional] (v0) edge (v5);
	\path [nondirectional] (v0) edge (v3);
	\path [nondirectional] (v3) edge (v4);
	\path [nondirectional] (v3) edge (v2);
	\path [nondirectional] (v1) edge (v2);
	\path [nondirectional] (v1) edge (v4);
	\path [nondirectional] (v1) edge (v5);
	\path [nondirectional] (v2) edge (v4);
	\path [nondirectional] (v2) edge (v5);
	\path [nondirectional] (v4) edge (v5);
\end{tikzpicture}
		\label{fig:m6_2b}
	}
	\caption{Possible configurations for $M_6$ in the case where $s_1$ is the only negative signature value. At left, $\deg(s_1) =4$, while at right, $\deg(s_1) = 3$. }
	\label{fig:m6_2ab}
\end{figure}
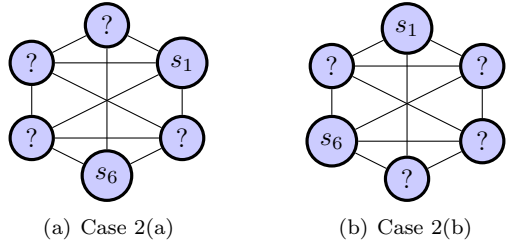
    
    \item Case: $\deg(s_1)=3$. Since $s_1$ is not adjacent to $s_6$, we know $\deg(s_6)=4$ and $s_6$ is adjacent to all vertices with positive signature values. By Lemma~\ref{lemma:max}, 
    $$s_6=s_2+s_5=s_3+s_4\Rightarrow s_6-s_5=s_2, \ s_5-s_4=s_3-s_2$$
    We consider 4 cases based on the other vertex to which $s_1$ is not adjacent (see Figure~\ref{fig:m6_2b}).
     
    	\begin{enumerate}
    	\item $s_5$ is not adjacent to $s_1$. Then, $\deg(s_5)=4$. Consider the intersection of the difference set of $s_5$ and $S$, $D(s_5)\cap S$.
    	$$D(s_5)\cap S=\{s_6-s_5, s_5-s_2, s_5-s_3, s_5-s_4\}$$
    	We already know that $s_6-s_5=s_2$. Note that all other elements of $D(s_5)\cap S$ are less than $s_5$ and greater than $s_1$, and thus can only take values of $s_2,s_3$ or $s_4$. Therefore, we can find a one-to-one correspondence between sets $D(s_5)\cap S$ and $\{s_2,s_3,s_4\}$.
    	$$s_5-s_2=s_4 \qquad s_5-s_3=s_3 \qquad s_5-s_4=s_2$$ 
    	Since $s_5-s_4=s_2$, we have that $s_3-s_2=s_2 \Rightarrow s_3=2s_2$. Then $s_5=2s_3=4s_2$, $s_6=s_5+s_2=5s_2$ and $s_4=s_5-s_2=3s_2$. Therefore, $s_2,s_3,s_4,s_5,s_6$ form an arithmetic progression of length 5 with common difference $s_2$.
    	By Lemma~\ref{lemma:smallest_non_signature}, $S$ is not a signature for $M_6$.
    	
    	\item $s_4$ is not adjacent to $s_1$. Then, $\deg(s_4)=4$. Consider the intersection of the difference set of $s_4$ and $S$, $D(s_4)\cap S$.
    	$$D(s_4)\cap S=\{s_4-s_3, s_4-s_2, s_6-s_4, s_5-s_4\}$$
    	Note that $s_4-s_3$ and $s_4-s_2$ are both less than $s_4$ and greater than $s_1$. Thus, we must have $s_4-s_3=s_2$ and $s_4-s_2=s_3$. We already know that $s_6-s_4=s_3$. Then, $s_5-s_4<s_6-s_4$ can only be $s_2$. Since $s_5-s_4=s_3-s_2$, $s_3=2s_2$. Thus, $s_4=3s_2$, $s_5=4s_2$ and $s_6=s_4+s_3=5s_2$.
    	Note that $s_2,s_3,s_4,s_5,s_6$ form an arithmetic progression with common common difference $s_2$. By Lemma~\ref{lemma:smallest_non_signature}, $S$ is not a signature for $M_6$.
    	
    	\item $s_3$ is not adjacent to $s_1$. Then, $\deg(s_3)=4$. Consider the intersection of the difference set of $s_3$ and $S$, $D(s_3)\cap S$.
    	 $$D(s_3)\cap S=\{s_3-s_2, s_4-s_3, s_5-s_3, s_6-s_3\}$$
    	Since $s_3-s_2<s_3$, $s_3-s_2$ has to be $s_2$. We already know that $s_6-s_3=s_4$. Note that both $s_5-s_3$ and $s_4-s_3$ are less than $s_6-s_3=s_4$ and greater than $s_3-s_2=s_2$. Then $s_5-s_3$ and $s_4-s_3$ both are $s_3$, which contradicts that $s_4\neq s_5$.
    	
    	\item $s_2$ is not adjacent to $s_1$. Then, $\deg(s_2)=4$. Consider the intersection of the difference set of $s_2$ and $S$, $D(s_2)\cap S$.
    	$$D(s_2)\cap S=\{s_3-s_2, s_4-s_2, s_5-s_2, s_6-s_2\}$$
    	We already know that $s_6-s_2=s_5$. All other elements of $D(s_2)\cap S$ are less than $s_5$ and greater than $s_1$, and thus can only take values of $s_2,s_3$ or $s_4$. Therefore, we can find a one-to-one correspondence between sets $D(s_2)\cap S$ and $\{s_2,s_3,s_4\}$.
    	$$s_3-s_2=s_2 \qquad s_4-s_2=s_3 \qquad s_5-s_2=s_4$$
    	Thus, $s_3=2s_2$, $s_4=3s_2$, $s_5=4s_2$ and $s_6=s_5+s_2=5s_2$. Therefore, $s_2,s_3,s_4,s_5,s_6$ form an arithmetic progression of length 5 with common difference $s_2$. By Lemma~\ref{lemma:smallest_non_signature}, $S$ is not a signature for $M_6$.
    	\end{enumerate}
    	
	\end{enumerate}
	\item Case: $m=0$. Let $s$ denote the smallest signature value in $S$. By Observation~\ref{obs:smallest_arithmetic}, $S$ can be partitioned into at most 3 maximal arithmetic progressions of common difference $s$. $S$ cannot consist of 1 arithmetic progression with common difference $s$ since $[1]^n_s$ is a signature for the complete graph. Thus, we consider two cases. 
	\begin{enumerate}
	\item Case: 3 arithmetic progressions. Let the three progressions be $[1]_s^{l_0}$, $[a_1]_s^{l_1}$, and $[a_2]_s^{l_2}$, where we assume without loss of generality that $s < a_1 < a_2$. Note that $s$ is not adjacent to either $a_1$ or $a_2$ since $[a_1]_s^{l_1}$ and $[a_2]_s^{l_2}$ are maximal progressions. Thus, $\deg(s)=3$ and $\deg(a_1)=\deg(a_2)=4$. Note that $s$ is not in the complete subgraph on 4 vertices. Then, the length of $[1]_s^{l_0}$, $l_0$ must be less than 4. Moreover, $l_1$ and $l_2$ are less than 4. Assume by contradiction that $l_i=4$, where $i=1,2$. Then, the other two arithmetic progressions have length 1, and there exists an arithmetic progression of 4 terms: $\{a_i,a_i+s,a_i+2s,a_i+3s\}$. Since $a_i$ is not adjacent to $s$ only, $a_i+s-a_i,a_i+2s-a_i,a_i+3s-a_i\in S\Rightarrow s,2s,3s\in S$, which contradicts that $l_0=1$.
	Therefore, $l_0,l_1,l_2$ are all less than 4. We discuss 3 cases of $l_0$.
	\begin{enumerate}
	\item $l_0=1$. Since $l_1,l_2$ are both less than 4, $S=\{s,a_1,a_1+s,a_2,a_2+s,a_i+2s\}$, where $i$ is 1 or 2. However, $|a_i+2s-a_i|=2s\notin S$, contradicting that $a_i$ is adjacent to every vertex besides $s$. 
	
	\item $l_0=3$. Then $S=\{s,2s,3s, a_1,a_2,a_i+s\}$, where $i=1$ or 2. Since $2s,3s$ are adjacent, $2s$ and $3s$ both have degree of 4, and $\deg(a_i+s)=3$. Note that $a_i+s$ is not adjacent to $2s,3s$. Since $a_1-2s<a_1$ and $a_1$ is adjacent to $2s$, then $a_1-2s \in \{s,2s,3s\}$. Since $\{s,2s,3s\}\in S$, $a_1=5s$. If $a_i+s=a_1+s=6s$, then $a_i+s$ is adjacent to $3s$, a contradiction. Thus, $l_1=1$, $l_2=2$ and $S=\{s,2s,3s,5s,a_2,a_2+s\}$. Since $a_2-2s<a_2\in S$, $a_2-2s=a_1=5s\Rightarrow a_2=7s$. However, $a_2-3s=4s\notin S$, which contradicts that $3s$ is adjacent to every vertex besides $a_2+s$.
	
	\item $l_0=2$. Note that $a_1$ is adjacent to $2s$.  We discuss 3 cases based on $l_1,l_2$. 
		\begin{enumerate}
		\item $l_1=2, \ l_2=2$. Then $S=\{s,2s,a_1,a_2,a_1+s,a_2+s\}$. Since $a_2$ is adjacent to every vertex besides $s$, $a_2-a_1$, $a_2+s-a_1$, $a_2-(s+a_1)\in S$. They form an arithmetic progression of length 3 with common difference $s$, contradicting that $l_0=l_1=l_2=2$.
		\item $l_1=1,\ l_2=3$.  Then $S=\{s,2s,a_1,a_2,a_2+s,a_2+2s\}$, and $a_2-a_1, a_2+s-a_1, a_2+2s-a_1\in S$. They form an arithmetic progression of length 3 with common difference $s$, and $S$ only contains one arithmetic progression of 3 terms: $\{a_2,a_2+s,a_2+2s\}$. Thus, $a_2-a_1=a_2$, a contradiction.
		\item $l_1=3,\ l_2=1$. Then $S=\{s,2s,a_1,a_2,a_1+s,a_1+2s\}$ and $a_1+s-a_2, a_1+2s-a_2\in S$. We know $\{s,2s\}$ is the only arithmetic progression of two terms in $S$. Then $a_1+s-a_2=s$, a contradiction. 
		\end{enumerate}
	\end{enumerate}
\item Case: 2 arithmetic progressions. Let the two maximal progressions be $[1]_s^{l_0}, [a_1]_s^{l_1}$. $$S=\{s,2s,...,l_0s\}\cup \{a_1, a_1+s,\ldots,a_1+(l_1-1)s\}$$
Consider the difference set of $s$, $D(s)$.
$$D(s)=\{a_1-s,a_1,a_1+s,...,a_1+(l_1-2)s\}\cup \{s,2s,...,(l_0-1)s\}$$
Since $[a_1]_s^{l_1}$ is a maximal arithmetic progression, $a_1-s$ cannot be in the signature. Note that all terms in $D(s)$ besides $a_1-s$ are in the signature, and $\deg(s)= 4$. Since $s$ is only non-adjacent to $a_1$, $\deg(a_1)=3$. We discuss 5 cases based on $l_0$ and $l_1$. 
	\begin{enumerate}
	\item $l_0=1 \ l_1=5$. Then, $S=\{s,a_1,a_1+s,a_1+2s,a_1+3s,a_1+4s\}$. Consider the difference set of $a_1+2s$. 
	$$D(a_1+2s)=\{s,s,2s,2s,a_1+s\}$$
	Thus, $\deg(a_1+2s)=3$. However, $a_1+2s$ is not adjacent to $a_1$, which contradicts that $a_1$ and $a_1+2s$ both have degree 3.
	
	\item $l_0=2,\ l_1=4$. Then, $S=\{s,2s,a_1,a_1+s,a_1+2s,a_1+3s\}$. Consider the difference set of $a_1+2s$.
	$$D(a_1+2s)=\{s,s,2s,a_1,a_1+s\}$$
	All elements of $D(a_1+2s)\in S$, which contradicts that all vertices have degree 3 or 4.
	
	\item $l_0=3,\ l_1=3$. Then, $S=\{s,2s,3s,a_1,a_1+s,a_1+2s\}$. Consider the difference set of $2s$.
	$$D(2s)=\{s,s,a_1-2s,a_1-s,a_1\}$$
	We know that $a_1-s\notin S$. If $a_1-2s\notin S$, then $\deg(2s)=3$. However, $2s$ is not adjacent to $a_1$, the other vertex with degree 3, a contradiction. Thus, $a_1-2s\in S$. Since $a_1-2s<a_1$ and $a_1>4s$, $a_1-2s$ can only be $3s$. Consider the difference set of $a_1=5s$.
		$$D(a_1)=\{4s,3s,2s,s,2s\}$$
	Only $4s$ in $D(a_1)\notin S$, contradicting that $\deg(a_1)=3$.
	
	\item $l_0=4, \ l_1=2$. Then, $S=\{s,2s,3s, 4s,a_1,a_1+s\}$. Consider the difference set of $2s$. Following the same argument, we deduce that $a_1-2s$ can only be $4s\Rightarrow a_1=6s$. Consider the difference set of $a_1=6s$.
	$$D(a_1)=\{5s,4s,3s,2s,s\}$$ 
	Only $5s$ in $D(a_1)$ is not in $S$, contradicting that $\deg(a_1)=3$.
	
	\item $l_0=5,\ l_1=1$. Then, $S=\{s,2s,3s, 4s,5s,a_1\}$. Consider the different sets of $2s,3s,4s,5s$. 
	$$D(2s)=\{s,s,2s,3s,a_1-s\}$$
	$$D(3s)=\{2s,s,s,2s,a_1-3s\}$$
	$$D(4s)=\{3s,2s,s,s,a_1-4s\}$$
	$$D(5s)=\{4s,3s,2s,s,a_1-5s\}$$
	Thus, $s, 2s,3s,4s,5s$ all have degree 4, which contradicts that 2 vertices have degree 4.
	\end{enumerate} 
	\end{enumerate}
\end{enumerate}
We have exhausted all possible signatures for $M_6$, without finding one. This completes the proof. 
\end{proof}

\section{$G_n$ are non-autographs}
\label{sec:g_n}

Let $G = \overline{2 C_{\frac{n}{2}}}$, for some even $n\geq 8$. That is, let $G_n$ consist of the graph formed by taking the complement of two disjoint cycles of length $n/2$. A symmetric depiction of $G_n$ for $n=10$ is shown in Figure \ref{fig:G}, with which we also include a drawing of its complement for clarity. 

We begin by making three observations about the structure of $G_n$ that will prove useful later on:

\begin{obs}\label{obs:clique}
The clique number of $G_n$ is at most $\frac{n}{2}$, where $n$ is even. 
\end{obs}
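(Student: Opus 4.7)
The plan is to use the standard duality between cliques in a graph and independent sets in its complement. Since $G_n = \overline{2C_{n/2}}$, the clique number of $G_n$ equals the independence number of $2C_{n/2}$, so it suffices to bound $\alpha(2C_{n/2})$ from above by $n/2$.

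First I would unpack the duality: a set $W \subseteq V(G_n)$ forms a clique in $G_n$ if and only if every pair of distinct vertices in $W$ is non-adjacent in $2C_{n/2}$, i.e., $W$ is an independent set in $2C_{n/2}$. Thus $\omega(G_n) = \alpha(2C_{n/2})$.

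Next I would compute the independence number of the underlying structure. Since $2C_{n/2}$ is the disjoint union of two copies of $C_{n/2}$, and independent sets in disconnected graphs decompose into independent sets in each component, we have $\alpha(2C_{n/2}) = 2\alpha(C_{n/2})$. The classical fact $\alpha(C_k) = \lfloor k/2 \rfloor$ follows from the observation that in any independent set in a cycle, chosen vertices must be pairwise non-consecutive along the cycle, so at most every other vertex is selected. Applying this with $k = n/2$ gives $\alpha(C_{n/2}) = \lfloor n/4 \rfloor$, and therefore
\[
\omega(G_n) \;=\; \alpha(2C_{n/2}) \;=\; 2\lfloor n/4 \rfloor \;\leq\; \frac{n}{2}.
\]

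There is no real obstacle here; the only subtlety is remembering that when $n \equiv 2 \pmod 4$ the bound is actually strict (the clique number is $(n-2)/2$), while equality holds when $n \equiv 0 \pmod 4$. In either case the weaker bound $n/2$ stated in the observation holds, which is all we need for the downstream arguments in Section~\ref{sec:g_n}.
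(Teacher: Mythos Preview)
Your proof is correct and takes essentially the same approach as the paper: both arguments use the duality $\omega(G_n)=\alpha(2C_{n/2})$ and then bound the independence number of each cycle by $\lfloor (n/2)/2\rfloor$, with the same case split according to whether $4\mid n$. Your version is simply a cleaner, more explicit rendering of the paper's terse argument.
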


\begin{proof}
Consider cycles of length $\frac n2$ in $\overline{G_n}$. If $4 \mid n$, the maximum clique includes every other vertex in each cycle. If $4 \nmid n$, each cycle in $\overline{G_n}$ has an odd length, and so the maximum clique has $\frac{n}{2}-1$ vertices. In both cases, the clique number is no more than $\frac{n}{2}$.
\end{proof}

\begin{obs}\label{obs:independent_set}
The size of the largest independent set in $G_n$ is 2, where $n$ is even. 
\end{obs}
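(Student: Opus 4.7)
The plan is to pass to the complement: an independent set in $G_n$ is precisely a clique in $\overline{G_n} = 2C_{n/2}$, so it suffices to show that the largest clique in $2C_{n/2}$ has exactly two vertices whenever $n \geq 8$.

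First I would argue the upper bound. Since the two cycles are vertex-disjoint, there are no edges between them in $\overline{G_n}$, so any clique in $2C_{n/2}$ lies entirely within a single copy of $C_{n/2}$. It therefore suffices to bound the clique number of a single cycle $C_{n/2}$. Because $n \geq 8$ and $n$ is even, we have $n/2 \geq 4$, so $C_{n/2}$ is triangle-free (each vertex has only two neighbors, and those two neighbors are not adjacent in a cycle of length at least $4$). Hence no three vertices of $C_{n/2}$ are pairwise adjacent, so the clique number of $C_{n/2}$, and thus of $2C_{n/2}$, is at most $2$.

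For the matching lower bound, any edge of $C_{n/2}$ (e.g., two consecutive vertices on one of the cycles) gives two vertices that are adjacent in $\overline{G_n}$, hence non-adjacent in $G_n$, yielding an independent set of size $2$ in $G_n$. Combining both bounds gives the claim.

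The argument is entirely elementary; the only subtlety to flag is the hypothesis $n \geq 8$, which is exactly what guarantees $n/2 \geq 4$ and thus that the cycles contain no triangle. I expect no obstacle here beyond stating the complement duality cleanly.
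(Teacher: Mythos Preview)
Your argument is correct and follows essentially the same approach as the paper: both pass to the complement and use that $2C_{n/2}$ is triangle-free (for $n\geq 8$) to rule out independent sets of size three. Your version is slightly more explicit in separating the upper and lower bounds and in noting that any clique must lie inside a single cycle, but the core idea is the same.
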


\begin{proof}
Note that $\overline{G_n}$ contains two cycles of length greater than or equal to 5. Every vertex is non-adjacent to exactly two vertices. Those two vertices are always adjacent to avoid forming a 3-cycle in $\overline{G_n}$. 
\end{proof}

\begin{obs}\label{obs:longest_path}
The length of the longest path in $\overline{G_n}$ is $\frac{n}{2}-1$, where $n$ is even. 
\end{obs}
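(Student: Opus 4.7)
The plan is to exploit the fact that $\overline{G_n} = 2C_{n/2}$ has a very rigid structure: it is the vertex-disjoint union of two cycles, each on $n/2$ vertices. Because a path is by definition a connected subgraph, any path in $\overline{G_n}$ must lie entirely within one of these two cycle components. So the question reduces to determining the length of the longest path in a single cycle $C_{n/2}$.

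For this reduction step, I would just cite the definition of $G_n = K_n \setminus 2C_{n/2}$ given at the start of Section~\ref{sec:g_n}: complementing $G_n$ removes all the edges of $K_n$ that were present in $G_n$ and restores precisely the two cycle edge sets, yielding two disjoint copies of $C_{n/2}$. Let $v_0, v_1, \ldots, v_{n/2 - 1}$ be the vertices of one such cycle, labeled consecutively along the cycle. Then $v_0 v_1 \cdots v_{n/2 - 1}$ is a path of length $\frac{n}{2} - 1$, establishing the lower bound.

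For the matching upper bound, I would argue that a path in a cycle $C_{n/2}$ cannot have length $n/2$, since a path of length $n/2$ on $n/2$ vertices would require all $n/2$ edges of the cycle, which would form the cycle itself and not a simple path. Any path uses distinct vertices, so its length is at most the number of vertices minus one, giving length at most $\frac{n}{2} - 1$ within a single component. Combined with the fact that paths cannot jump between the two components of $\overline{G_n}$, this gives the desired upper bound.

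There is no real obstacle here; this is essentially a structural observation that follows immediately from the disconnectedness of $\overline{G_n}$ and the elementary fact that a longest path in $C_k$ has length $k-1$. The entire argument should fit in two or three sentences, and its purpose is simply to record a lemma to be invoked later in the proof that $G_n$ is a non-autograph.
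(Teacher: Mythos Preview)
Your proposal is correct and takes essentially the same approach as the paper, which simply says ``Take one of the cycles in $\overline{G_n}$ and remove one edge.'' If anything, your version is more complete: the paper's one-line proof only exhibits the path of length $\frac{n}{2}-1$ and leaves the upper bound implicit, whereas you spell out both directions.
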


\begin{proof}
Take one of the cycles in $\overline{G_n}$ and remove one edge. 
\end{proof}

We first prove a helpful lemma based on our previous results on arithmetic progressions. 


\begin{lemma} \label{lemma: gn_no_zero}
A set containing $0$ cannot be the signature for $G_n$ for some even $n\geq 8$.
\end{lemma}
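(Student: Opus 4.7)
The plan is to exploit the regularity of $G_n$ together with the two earlier lemmas controlling the largest signature values. Since $\overline{G_n} = 2C_{n/2}$ is $2$-regular, $G_n$ is $(n-3)$-regular, so every vertex has exactly two non-neighbors. In particular $\deg(s_n) = n-3$, and Corollary~\ref{cor:neg_sig_values} immediately gives $m \leq 2$.

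Next I would argue that we must have exactly $m = 2$ and pin down the neighborhood of $\pi(0)$. Assume for contradiction that $0 \in S$, and write $s_{m+1} = 0$. By Observation~\ref{obs:deg_of_0}, $\pi(0)$ is adjacent to every vertex with a strictly positive signature value; since $\pi(0)$ has exactly two non-neighbors (by regularity), both of these non-neighbors must carry negative signature values. Hence $m \geq 2$, and combined with the bound above this forces $m = 2$ with $s_3 = 0$, and the non-neighbors of $\pi(0)$ are exactly $\pi(s_1)$ and $\pi(s_2)$.

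Then I would apply Lemma~\ref{lemma:kth-largest} at the top of the signature. Taking $r = 1$ tells us that $\pi(s_n)$ is adjacent to zero vertices with negative signature value, and since $\pi(s_n)$ has only two non-neighbors, those non-neighbors must be exactly $\pi(s_1)$ and $\pi(s_2)$. Consequently each of $\pi(s_1)$ and $\pi(s_2)$ already has two identified non-neighbors, namely $\pi(s_3)$ and $\pi(s_n)$, and again by regularity these are the \emph{only} non-neighbors. In particular both $\pi(s_1)$ and $\pi(s_2)$ are adjacent to $\pi(s_{n-1})$.

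This is the contradiction: Lemma~\ref{lemma:kth-largest} with $r = 2$ says that $\pi(s_{n-1})$ can be adjacent to at most one vertex with a negative signature value, yet we have just shown it is adjacent to both $\pi(s_1)$ and $\pi(s_2)$. No real obstacle is anticipated; the entire argument is forced by the two-non-neighbor count and the descending-degree constraint on $s_n$ and $s_{n-1}$, so the main care needed is in the bookkeeping that converts ``at most two negative values'' plus ``exactly two non-neighbors of $\pi(0)$'' into the rigid configuration that then collides with the $r = 2$ case of the lemma.
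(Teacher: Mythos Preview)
Your proof is correct and in fact slightly cleaner than the paper's. Both arguments begin identically: regularity forces every vertex to have exactly two non-neighbors, Corollary~\ref{cor:neg_sig_values} gives $m\le 2$, Observation~\ref{obs:deg_of_0} forces $m=2$ with $s_3=0$ non-adjacent to $s_1,s_2$, and Lemma~\ref{lemma:kth-largest} with $r=1$ pins the non-neighbors of $\pi(s_n)$ as $\pi(s_1),\pi(s_2)$. At this point the paper takes a structural detour: it observes that $s_1,s_n,s_2,s_3$ form a $4$-cycle in $\overline{G_n}=2C_{n/2}$, which is impossible unless $n=8$, and then handles $n=8$ by showing $s_7$ is adjacent to both negatives and deriving $s_7-s_1=s_7-s_2=s_8$. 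Your finish avoids the case split entirely: having shown that the two non-neighbors of each of $\pi(s_1),\pi(s_2)$ are exactly $\pi(s_3)$ and $\pi(s_n)$, you immediately conclude both are adjacent to $\pi(s_{n-1})$ and invoke the $r=2$ case of Lemma~\ref{lemma:kth-largest} for the contradiction. This is the same mechanism the paper uses for $n=8$ (where $s_{n-1}=s_7$), but you apply it uniformly for all even $n\ge 8$, so no appeal to the cycle structure of $\overline{G_n}$ is needed.
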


\begin{proof}
Note that $\delta(G_n) = n-3$, and the codegree of $G_n$ is $k = 2$. By Corollary~\ref{cor:neg_sig_values} that there are at most 2 negative signature values.
Assume by contradiction that $0\in S=\{s_1,s_2,\ldots,s_n\}$ and $S$ is the signature for $G_n$. By Observation~\ref{obs:deg_of_0}, it is clear that if $S$ contains $0$, it must contain exactly two negative signature values, since otherwise $\deg(0) > n-2$. Thus, since the vertex labeled with $0$ is adjacent to all $n-3$ vertices with positive signature values, it is not adjacent to $s_1$ and $s_2$. Specifically,  Then $s_1<s_2<0$, $s_3=0$, and $0<s_4<s_5< \ldots < s_n$. By Lemma~\ref{lemma:kth-largest}, $s_n$ is also not adjacent to $s_1$ and $s_2$. Thus, $s_1, s_n, s_2$ and $s_3$ form a 4-cycle in $\overline{G_n}$. This is only possible when $n=8$, shown in Figure~\ref{fig:g8}. In this case, $s_7$ is adjacent to both $s_1$ and $s_2$. 
$$s_7-s_1 > s_7 \in S \qquad s_7 - s_2 > s_7 \in S \Rightarrow s_7-s_1 = s_7-s_2 = s_8$$
This contradicts the fact that $G_n$ is a monograph. 
\end{proof}

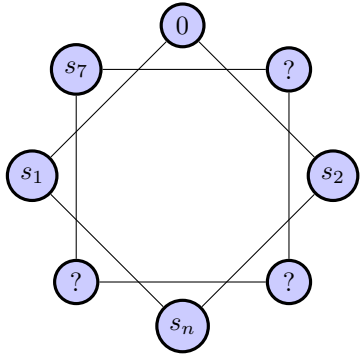
\begin{figure}[ht]
	\centering
	\tikzset{
	node/.style={circle,inner sep=1mm,minimum size=0.2cm,draw,very thick,black,fill=blue!20,text=black},
	nondirectional/.style={thin,black},
}

\begin{tikzpicture}[scale=2]
	\def\x{0.7071068}
	\node [node] (v0) at (0, 1)	{$0$};
	\node [node] (v1) at (\x, \x)	{?};
	\node [node] (v2) at (1, 0)	{$s_2$};
	\node [node] (v3) at (\x, -\x)	{?};
	\node [node] (v4) at (0, -1)	{$s_n$};
	\node [node] (v5) at (-\x, -\x)	{?};
	\node [node] (v6) at (-1, 0)	{$s_1$};
	\node [node] (v7) at (-\x, \x)	{$s_7$};
	\path [nondirectional] (v0) edge (v2);
	\path [nondirectional] (v2) edge (v4);
	\path [nondirectional] (v4) edge (v6);
	\path [nondirectional] (v6) edge (v0);
	\path [nondirectional] (v1) edge (v3);
	\path [nondirectional] (v3) edge (v5);
	\path [nondirectional] (v5) edge (v7);
	\path [nondirectional] (v7) edge (v1);
\end{tikzpicture}
	\caption{$\overline{G_8}$: In $G_8$, $s_7$ is adjacent to both $s_1$ and $s_2$, creating the impossible situation in which there are two distinct signature values greater than $s_7$.}
	\label{fig:g8}
\end{figure}

\begin{lemma}\label{lemma:K_n-delete-1}
The set $S = \{s, 2s, ..., ns\} \setminus \{is\}$ is not a signature for $G_{n-1}$, where $n\geq 9$ and $n$ is odd. 
\end{lemma}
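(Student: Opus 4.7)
The plan is to exploit the regularity of $G_{n-1}$ to force a contradiction. Since $G_{n-1} = K_{n-1} \setminus 2 C_{(n-1)/2}$ has codegree $2$, every vertex has degree $(n-1) - 1 - 2 = n - 4$, so $G_{n-1}$ is $(n-4)$-regular. It therefore suffices to exhibit a single vertex in the graph realized by $S$ whose degree differs from $n-4$; and Lemma~\ref{lemma:one_deletion_degrees} computes those degrees explicitly, showing they take only the values $n-4$, $n-3$, or $n-2$, possibly shifted by $\epsilon_i \in \{0,1\}$. So the work is just a careful case-check, not any new combinatorial insight.

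I would split on whether $i = n$. In the main case $i \neq n$, I would focus on the vertex labeled $ns$, which survives the deletion. Checking the case structure of Lemma~\ref{lemma:one_deletion_degrees}: the condition $i < p \leq n-i$ of case~1 fails since $p = n$ would force $i \leq 0$, and the condition $n - i < p < i$ of case~2 fails since $p = n$ would force $i > n$. So we land in the ``otherwise'' case and obtain $\deg(ns) = n - 3 + \epsilon_i$. Here the odd-parity hypothesis does the real work: $\epsilon_i = 1$ would require $p = 2i$, i.e.\ $i = n/2$, which is impossible for odd $n$. Hence $\deg(ns) = n - 3 \neq n - 4$, and the graph is not $(n-4)$-regular.

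The remaining case $i = n$ collapses immediately: for every surviving vertex $ps$ with $1 \leq p \leq n-1$, the interval $(n - i, i) = (0, n)$ contains $p$, so case~2 of Lemma~\ref{lemma:one_deletion_degrees} applies uniformly and $\deg(ps) = n - 2$, again failing $(n-4)$-regularity. In either case, the graph realized by $S$ cannot be $G_{n-1}$. The main (mild) obstacle is just confirming that the $\epsilon_i$ correction cannot sneak the degree of $ns$ back down to $n-4$ in the principal case, which is precisely where the oddness of $n$ is invoked; without that parity hypothesis one would need to pick a different witness vertex, but the conclusion would still follow from the same lemma.
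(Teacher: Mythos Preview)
Your proof is correct and takes essentially the same approach as the paper: both invoke Lemma~\ref{lemma:one_deletion_degrees} and the $(n-4)$-regularity of $G_{n-1}$ to force a contradiction. The only cosmetic difference is that the paper argues by counting---the set $\{p : \deg(ps)=n-4\}\subseteq (i,n-i]\setminus\{2i\}$ has fewer than the required $n-1$ elements---whereas you exhibit the explicit witness $p=n$ (and $p=1$ when $i=n$); both routes are equally short.
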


\begin{proof}
We can immediately apply Lemma \ref{lemma:one_deletion_degrees} to obtain the degree of vertices in the monograph generated by $S$. Note that every vertex of $G_{n-1}$ has degree $n-4$. Only for $p\in (i,n-i]$ and $p\neq 2i$, $\deg(ps)=n-4$. Clearly, $(i,n-i]\setminus \{2i\}$ containing $n-2i-1$ terms does not include all of $[1,n]\setminus \{i\}$ containing $n-1$ terms. Therefore, some vertices in $S$ do not have degree $n-4$, and $S$ cannot be a signature for $G_{n-1}$.
\end{proof}

\begin{lemma}\label{lemma:K_n-delete-2}
The signature $S = \{s, 2s, ..., ns\} \setminus \{is,js\}$ for some $i < j$ is not a signature for $G_{n-2}$, where $n\geq 10$ and $n$ is even. 
\end{lemma}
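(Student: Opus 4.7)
The plan is to apply Lemma~\ref{lemma:two_deletions_degree} and show that the resulting vertex degrees cannot all equal $n-5$, which is the common degree of every vertex in the $(n-5)$-regular graph $G_{n-2}$. Because the correction terms $\epsilon_i, \epsilon_j, \gamma_{ij}, \beta_{ij}$ are all non-negative, any vertex whose base degree from Lemma~\ref{lemma:two_deletions_degree} exceeds $n-5$ is an immediate obstruction. Hence Case~4 (base $n-3$) and Case~3 (base $n-4$) must be empty, which forces $i \leq n/2$ and $i + j \leq n + 1$.

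The key test vertex is $ps$ with $p = i + j$. If $i + j \leq n$, this vertex exists and receives the correction $\gamma_{ij} = 2$; to land at degree $n-5$ it must sit in Case~1, whose range condition reduces to $i + 2j \leq n$. This forces $j < n/2$, so Case~1 itself contains $n - 2j \geq 2$ vertices. But the only Case~1 vertex whose total correction equals exactly $2$ is $p = i + j$ itself: $\beta_{ij}$ never applies inside Case~1 (because $p = j - i < j$), and $\epsilon_i + \epsilon_j = 2$ would require the impossible $2i = 2j$. Some other Case~1 vertex then has true degree $n - 7$, a contradiction. The borderline value $i + j = n$ is ruled out directly, since the vertex $p = n$ lies in the otherwise case and still carries $\gamma_{ij} = 2$, giving $\deg(ns) = n - 3$. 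We are thus pushed into $i + j = n + 1$; under this constraint one checks that Cases~1--4 are all empty, so every vertex starts from base degree $n-5$.

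It remains to kill the two $+1$ corrections that still apply in the otherwise case: $\epsilon_i$ at $p = 2i$ and $\beta_{ij}$ at $p = j - i$. Each correction can be suppressed only if the corresponding vertex lies outside $\{1,\dots,n\}\setminus\{i,j\}$, and both conditions reduce to the single requirement $j = 2i$, yielding $i = (n+1)/3$. This is an integer only when $n \equiv 2 \pmod 3$, so $n \in \{10, 12\}$ already fall to the degree analysis above. For the remaining values $n \geq 14$ with $n \equiv 2 \pmod 3$, set $a = (n+1)/3$ so $n = 3a-1$; the vertex set $\{1,\dots,3a-1\}\setminus\{a, 2a\}$ partitions by residue modulo $a$ into the $a-1$ triples $\{r,\, a+r,\, 2a+r\}$ for $r = 1,\dots, a-1$. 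All three pairs inside a triple differ by $a$ or $2a$ and hence are non-edges, while differences across distinct triples are not multiples of $a$ and therefore lie in $S$. The complement of the resulting graph is thus $(a-1)\cdot C_3$, which cannot equal $\overline{G_{n-2}} = 2\,C_{(n-2)/2}$ once $a - 1 = (n-2)/3 \geq 4$, i.e., for $n \geq 14$. The main obstacle will be this last structural mismatch; every earlier step is direct bookkeeping inside Lemma~\ref{lemma:two_deletions_degree}.
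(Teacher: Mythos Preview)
Your argument follows the same degree-counting strategy as the paper (apply Lemma~\ref{lemma:two_deletions_degree} and force the $n-4$ and $n-3$ cases to be empty, then analyse the vertex $p=i+j$), but you carry it out more carefully and end up with a strictly more complete proof. The paper asserts that emptiness of the $n-4$ and $n-3$ ranges yields $n-i\ge j$, i.e.\ $i+j\le n$; in fact, because the endpoints $p=i$ and $p=j$ are excluded, the correct conclusion is only $i+j\le n+1$, and you catch this. The boundary case $i+j=n+1$ genuinely survives the pure degree analysis whenever $3\mid(n+1)$ (take $i=(n+1)/3$, $j=2i$; every remaining vertex has base $n-5$ and the correction vertices $2i$ and $j-i$ both coincide with $j$ and $i$), so your added structural step---identifying the complement of the resulting graph as $(a-1)\,C_3$ rather than $2\,C_{(n-2)/2}$---is not optional padding but an actual gap-filler that the paper's proof lacks.

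Two small points of polish. First, in the Case~1 argument you write ``some other Case~1 vertex then has true degree $n-7$''; strictly, such a vertex could pick up a single $\epsilon_i$ or $\epsilon_j$ and land at $n-6$, so the statement should read ``degree at most $n-6$''---the contradiction is unaffected. Second, your sentence about the borderline $i+j=n$ is redundant (it is already covered by ``$p=i+j$ must lie in Case~1, which forces $i+2j\le n$''), though harmless.
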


\begin{proof}
Assume by contradiction that $S$ is a signature for $G_{n-2}$. We apply Lemma~\ref{lemma:two_deletions_degree} to obtain the degree of vertices in the monograph generated by $S$. Since all vertices of $G_{n-2}$ have degree $n-5$, there does not exist $p$ such that $\max(i,n-i)<p< j$ or $n-j<p\leq \min(n-i,i)$ or $n-i<p<i$, since this would already imply too large of a degree for $ps$. Thus, $n-i\geq j \Rightarrow n\geq i+j$ if all vertices have degree $n-5$. 

If there exists $p$ such that $j<p\leq n-j$, then there must exist only one such value $p$ in $(j,n-j]$ and $\gamma_{ij}=2$ for the vertex with signature value $ps$ so that $\deg(ps)=n-5$. Therefore, it must be the case that $j$ and $n-j$ are consecutive integers, and in particular, $j+1 = n-j$. Thus $p=i+j=j+1=n-j\Rightarrow n=2j+1$, which contradicts that $n$ is even. Therefore, there does not exist $p$ such that $j<p\leq n-j$, and $\gamma_{ij}=0$ for all vertices. Thus, $i+j>n$, which contradicts the previous conclusion that $n\geq i+j$. 

\end{proof}

Our main result follows:

\begin{theorem}
Let $G_n = \overline{2 C_{\frac{n}{2}}}$, for some even $n \geq 8$. Then $G_n$ is not an autograph. 
\end{theorem}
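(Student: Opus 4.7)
The plan is to suppose by contradiction that $S = \{s_1 < s_2 < \cdots < s_n\}$ is a signature for $G_n$ and to derive a contradiction. By Lemma~\ref{lemma: gn_no_zero}, $0 \notin S$, and since $\deg(s_n) = n-3$, Corollary~\ref{cor:neg_sig_values} gives $m \in \{0,1,2\}$ negative values in $S$. I will split the argument on $m$.

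In case $m = 2$, Lemma~\ref{lemma:kth-largest} with $r = 1$ pins $s_n$'s non-neighbors to $\{s_1, s_2\}$, and Lemma~\ref{lemma:max} then yields the palindromic identities $s_n = s_i + s_{n+2-i}$ for $3 \leq i \leq n-1$; in particular $s_n - s_{n-1} = s_3$. Applying Lemma~\ref{lemma:kth-largest} with $r = 2$ to $s_{n-1}$, at least one of $\{s_1, s_2\}$ is non-adjacent to $s_{n-1}$, which splits into two sub-cases. If both are, the analogous palindrome for $s_{n-1}$ subtracted from that for $s_n$ gives $s_{j+1} - s_j = s_3$ for $j = 3, \ldots, n-1$, so $\{s_3, \ldots, s_n\} = \{s_3, 2s_3, \ldots, (n-2)s_3\}$ is an arithmetic progression; degree counts for $s_1$ and $s_2$ then force $|s_1| = |s_2| = 2s_3$, contradicting $s_1 < s_2$. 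If only one of $\{s_1, s_2\}$ is non-adjacent to $s_{n-1}$, I propagate the broken palindrome and rule out each candidate positive non-neighbor $s_\beta$ of $s_{n-1}$ by a degree count on $s_3$ or on the surviving negative vertex.

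For cases $m = 0$ and $m = 1$, my goal is to push $S$ into the form to which Lemma~\ref{lemma:K_n-delete-1} or Lemma~\ref{lemma:K_n-delete-2} applies. When $m = 0$, Observation~\ref{obs:smallest_arithmetic} partitions $S$ into at most three APs of common difference $s_1$. One AP gives $S = [1]_{s_1}^n$, the signature of $K_n$; in the two-AP case a direct computation shows that $s_1$ has only one non-neighbor (the first element of the second AP), contradicting codegree $2$. So $S$ consists of exactly three APs $[1]_{s_1}^{l_0} \cup [a_1]_{s_1}^{l_1} \cup [a_2]_{s_1}^{l_2}$; codegree $2$ of $s_1$ pins its non-neighbors to $\{a_1, a_2\}$, and their adjacency in $G_n$ forces $a_2 - a_1 \in S$. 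Repeating the analysis from the top at $s_n$ together with the codegree constraint at interior vertices would force both inter-AP gaps to contain exactly one missing multiple of $s_1$, so $S = \{s_1, 2s_1, \ldots, (n+2)s_1\} \setminus \{i s_1, j s_1\}$ for some $1 < i < j \leq n+1$, and Lemma~\ref{lemma:K_n-delete-2} (with $n$ replaced by $n+2$) finishes the case. For $m = 1$, Observations~\ref{obs:neg_structure} and \ref{obs:neg_0} force $S^+$ to be a prefix of $|s_1| \cdot \mathbb{Z}_{\geq 1}$ missing at most one term, and after combining with the adjacency of the two non-neighbors of $s_n$ we again reduce to the hypotheses of Lemma~\ref{lemma:K_n-delete-1} or \ref{lemma:K_n-delete-2}.

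The main obstacle lies in the detailed bookkeeping needed to eliminate the mixed sub-case of $m = 2$, where palindromic symmetry breaks at exactly one position, and in verifying, for case $m = 0$, that the codegree-$2$ condition on \emph{every} vertex---not just on $s_1$ and $s_n$---pins both inter-AP gaps down to exactly one missing multiple of $s_1$. The latter requires a careful accounting of which differences $s_i - s_j$ land in each of the three APs, since a priori the gaps could absorb up to two missing elements in total across different configurations.
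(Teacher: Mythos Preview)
Your overall architecture matches the paper's: split on $m\in\{0,1,2\}$, then reduce to the arithmetic-progression lemmas. However, two of your sub-cases contain genuine gaps, one of which is an actual error rather than just missing bookkeeping.

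\textbf{The $m=0$ case.} Your claimed reduction---that the codegree-$2$ constraint forces \emph{both} inter-AP gaps to contain exactly one missing multiple of $s_1$, landing you in Lemma~\ref{lemma:K_n-delete-2}---is false. When $l_0=1$ (i.e.\ $2s_1\notin S$), the paper shows $a_1=3s_1$, and then the adjacency of $3s_1$ with $a_2$ only forces $a_2-3s_1\in\{(l_1+1)s_1,(l_1+2)s_1\}$. The second option gives $a_2=(l_1+5)s_1$, so
\[
S=[1]_{s_1}^{n+3}\setminus\{2s_1,(l_1+3)s_1,(l_1+4)s_1\},
\]
which has \emph{three} deletions and a two-element gap; Lemma~\ref{lemma:K_n-delete-2} does not apply. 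The paper disposes of this configuration by a direct degree count at $3s_1$ (for $l_1\le n-4$) or at $(l_1+5)s_1$ (for $l_1\ge n-3$). Your plan to ``repeat the analysis from the top at $s_n$'' does not catch this, and your concluding paragraph effectively concedes the point. This sub-case is where the proof actually has teeth; it cannot be absorbed into Lemma~\ref{lemma:K_n-delete-2}.

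\textbf{The $m=2$ mixed case.} ``Propagate the broken palindrome and rule out each candidate positive non-neighbor $s_\beta$ of $s_{n-1}$'' is not a proof. The paper does not enumerate $s_\beta$; instead it splits on which of $s_1,s_2$ is adjacent to $s_{n-1}$. If $s_{n-1}\sim s_1$, one line gives $|s_1|=s_3$ and then $0<s_2-s_1<s_3$ contradicts $s_1\sim s_2$. If $s_{n-1}\sim s_2$, one gets $|s_2|=s_3$ and then Observation~\ref{obs:neg_structure} partitions $S^+$ into at most two $s_3$-APs, after which adjacency of $s_1$ with $l_0s_3$ and with $s_{n-2}$ pins the structure down and a degree count at $s_3$ finishes. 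Your palindrome-propagation idea does not obviously reproduce either argument; in particular, the first sub-case is settled without touching $s_{n-1}$'s positive non-neighbor at all.

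Your treatment of the ``both $s_1,s_2$ non-adjacent to $s_{n-1}$'' sub-case (which only arises for $n=8$) is correct and in fact cleaner than the paper, which glosses over it.
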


\begin{proof}
First, note that no two vertices in $G_n$ have the same set of neighbors, and so by Observation~\ref{diff_neigobor}, $G_n$ is a monograph. Next, note that $\delta(G_n) = n-3$, and the codegree of $G_n$ is $k = 2$. It follows from Corollary~\ref{cor:neg_sig_values} that $m \leq 2$. We thus consider three cases.
  
\begin{figure}
	\centering
	\subfigure[Case 2(a)]{
		\tikzset{
	node/.style={circle,inner sep=1mm,minimum size=0.5cm,draw,very thick,black,fill=blue!20,text=black},
	nondirectional/.style={thin,black},
}

\begin{tikzpicture}[yscale=1]
	\node [node] (v0) at (0, 1)	{$s_n$};
	\node [node] (v1) at (1, 0.5)	{$s_2$};
	\node [node] (v2) at (1, -0.5)	{$s_{n-1}$};
	\node [node] (v3) at (0, -1)	{?};
	\node [node] (v4) at (-1, -0.5)	{?};
	\node [node] (v5) at (-1, 0.5)	{$s_1$};
	\path [nondirectional] (v5) edge (v1);
	\path [nondirectional] (v5) edge (v2);
	\path [nondirectional] (v0) edge (v2);
	\path [nondirectional] (v0) edge (v3);
	\path [nondirectional] (v0) edge (v4);
	\path [bend left,dotted] (v3) edge (v4);
\end{tikzpicture}
	} 
	\qquad		
	\subfigure[Case 2(b)]{
		\tikzset{
	node/.style={circle,inner sep=1mm,minimum size=0.5cm,draw,very thick,black,fill=blue!20,text=black},
	nondirectional/.style={thin,black},
}

\begin{tikzpicture}[yscale=1]
	\node [node] (v0) at (0, 1)	{$s_n$};
	\node [node] (v1) at (1, 0.5)	{$s_2$};
	\node [node] (v2) at (1, -0.5)	{?};
	\node [node] (v3) at (0, -1)	{?};
	\node [node] (v4) at (-1, -0.5)	{$s_{n-1}$};
	\node [node] (v5) at (-1, 0.5)	{$s_1$};
	\path [nondirectional] (v5) edge (v1);
	\path [nondirectional] (v1) edge (v4);
	\path [nondirectional] (v0) edge (v4);
	\path [nondirectional] (v0) edge (v2);
	\path [nondirectional] (v0) edge (v3);
	\path [bend right,dotted] (v3) edge (v2);
\end{tikzpicture}
	}
	\caption{ Possible configurations for $G_n$ with $m=2$. At left, $s_{n-1}$ is adjacent to $s_1$ but not $s_2$. At right, the opposite. }
	\label{fig:gn_51}
\end{figure}
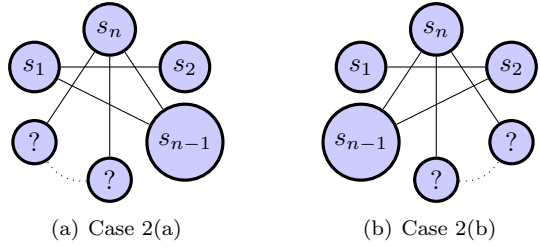

\begin{enumerate}
\item Case: $m=2$: Let $s_1 < s_2 < 0$. Note that the maximum signature value $s_n$ is adjacent to all vertices with positive signature values, namely $s_3, s_4,..., s_{n-1}$. By Lemma~\ref{lemma:max}, 
$$
s_n=s_3+s_{n-1} \quad\Rightarrow \quad s_n-s_{n-1}=s_3
$$
Since $s_n$ is not adjacent to either $s_1$ or $s_2$ and $s_1$, $s_2$ have different neighborhoods, it follows that $s_1$ and $s_2$ are adjacent. Likewise, $s_{n-1}$ is adjacent to either $s_1$ or $s_2$, but not both. Consider two cases where $s_{n-1}$ is adjacent to $s_2$ or $s_1$, as depicted in Figure~\ref{fig:gn_51}.
\begin{enumerate}
\item Case: $s_{n-1}$ is adjacent to $s_1$. Then $s_{n-1}-s_1>s_{n-1}$ must be $s_n$. This implies that $s_{n-1}-s_1=s_n$, and so $s_n-s_{n-1}=-s_1$. But now,
$$
	s_2-s_1=s_2+s_n-s_{n-1}<s_n-s_{n-1}=s_3 \,.
$$
Since $s_3$ is the smallest positive signature value, $s_2-s_1<s_3$ indicates that $s_2-s_1\notin S$, contradicting that $s_2$ and $s_1$ are adjacent.

\item Case: $s_{n-1}$ is adjacent to $s_2$. Then,
$$s_{n-1}-s_2=s_n$$
Since $s_n=s_3+s_{n-1}$ by Lemma~\ref{lemma:max}, it follows that $s_2=-s_3$. 

By Observation~\ref{obs:neg_structure}, $\{s_3, s_4,...,s_n\}$ can be partitioned into at most two arithmetic progressions with common difference $|s_2|=s_3$. 
If $\{s_3, s_4,...,s_n\}$ is an arithmetic progression with common difference $s_3$, the graph would contain a complete subgraph of $n-2$ vertices, contradicting that the clique number of $G_n$ cannot exceed $n/2$ (Observation~\ref{obs:clique}). Thus, $\{s_3, s_4,...,s_n\}$ consists of two arithmetic progressions with common difference $s_3$. Let the two arithmetic progressions be 
$$
\{s_3, 2s_3,...,l_0s_3\}\text{ and }\{a_1, a_1+s_3,...,s_n\}
$$
where $a_1\neq (l_0+1)s_3$. Note that $s_1$ is adjacent to all vertices other than $s_n$ and $s_{n-1}$. In particular, $s_1$ is adjacent to $l_0s_3$. Then, $l_0s_3-s_1=(l_0+2)s_3$ is in the signature and $(l_0+2)s_3\in \{a_1,...s_n\}$. Thus, $\{a_1,...,s_n\}$ consists of multiples of $s_3$ and $a_1>(l_0+1)s_3$. \\
Since $|s_1|>|s_2|=s_3$, $|s_1|>s_n-s_{n-1}\Rightarrow s_{n-1}+|s_1|>s_n$. Thus, $|s_{n-2}-s_1|=s_{n-2}+|s_1|=s_{n-1}-s_3+|s_1|>s_n-s_3=s_{n-1}$. Since $s_{n-2}$ is adjacent to $s_1$, $s_{n-2}+|s_1|$ must be $s_n$. Therefore, $$s_n-s_{n-2}=|s_1|=2s_3$$
Since $a_1>(l_0+1)s_3$, $a_1+s_3>(l_0+2)s_3$ and all terms in $\{a_1,a_1+s_3,...,s_n\}$ except for $a_1$ are greater than $(l_0+2)s_3$. It follows that $a_1=(l_0+2)s_3$. Therefore, the signature $S$ is
$$S=\{-2s_3,-s_3\}\cup\{s_3,2s_3,...,l_0s_3\}\cup\{(l_0+2)s_3, (l_0+3)s_3,...,s_n\}$$

Consider the difference set of $s_3$, $D(s_3)$.
\nonumber
\begin{multline}
D(s_3)=\{3s_3,2s_3\}\cup\{s_3,2s_3,...,(l_0-1)s_3\}\\
\cup \{(l_0+1)s_3,(l_0+2)s_3,...s_n-s_3\}
\end{multline}
\nonumber
Observe that in $D(s_3)$ only $(l_0+1)s_3$ is not in the signature. Thus, $\deg(s_3)=n-2$, which contradicts that every vertex in $G_n$ has degree $n-3$.
\end{enumerate}

\item Case: $m=1$. Let $s_1<0$. Observation~\ref{obs:neg_structure} implies that $S$ consists of $s_1$ and at most two arithmetic progressions with common difference $|s_1|$. Two cases are discussed based on the number of arithmetic progressions in $S$. Recall that $l$ denotes the length of the longest arithmetic progression. 

\begin{enumerate}
\item Case: 1 arithmetic progression \\
$$S=\{s_1\}\cup \{s_2, s_2+|s_1|,\ldots, s_2+(l-1)|s_1|\}$$
$$D(s_1)=\{s_2+|s_1|, s_2+2|s_1|,\ldots s_2+(l-1)|s_1|,s_2+l|s_1|\}$$
In $D(s_1)$, only $s_2+l|s_1|\notin S$ and $s_1$ is only non-adjacent to $s_2+(l-1)|s_1|$, contradicting that every vertex of $G_n$ has degree $n-3$.

\item Case: 2 arithmetic progressions \\
By Observation \ref{obs:neg_0}, at most one of $\{|s_1|, 2|s_1|, ..., (l-1)|s_1|\}$ is not in $S$. If $|s_1|\notin S$, we suppose the two arithmetic progressions of $S$ are $\{2|s_1|,3|s_1|,\ldots,(l_1+1)|s_1|\}$ and $\{s_n-(l_2-1)|s_1|,...,s_n-|s_1|,s_n\}$. Since we assume $|s_1|\notin S$, $\{2|s_1|,3|s_1|,\ldots,(l_1+1)|s_1|\}$ and $\{s_1,s_n-(l_2-1)|s_1|,...,s_n-|s_1|,s_n\}$ form two paths in $\overline{G_n}$, starting with $2|s_1|,s_1$ and ending with $(l_1+1)|s_1|,s_n-(l_2-1)|s_1|$ respectively. This contradicts that $\overline{G_n}$ consists of two cycles. Therefore, $|s_1|\in S$ and $|s_1|$ is the smallest term in one arithmetic progression.
 
Suppose the smallest term of the other arithmetic progression is $s_j$. If no element from $\{|s_1|, 2|s_1|,\ldots, (l_1-1)|s_1|\}$ is missing, we suppose the length of the other arithmetic progressions is $l_2$. 
\end{enumerate} 
\begin{align*}
S &=\{s_1\}  \cup\{|s_1|, 2|s_1|,\ldots, (l-1)|s_1|\}\\
	&\qquad \cup\{s_j, s_j+|s_1|,\ldots, s_j+(l_2-1)|s_1|\} \\
D(|s_1|) &=\{2|s_1|\}   \cup\{|s_1|,2|s_1|,\ldots,(l-2)|s_1|\}\\
	&\qquad \cup\{|s_j-|s_1||,s_j, s_j+|s_1|,\ldots,(l_2-2)|s_1|\}
\end{align*}
In $D(|s_1|)$, only $|s_j-|s_1||$ may not be in $S$ and $\deg(|s_1|)\geq n-2$. This contradicts every vertex of $G_n$ has degree $n-3$. 

Thus, exactly one member of $\{|s_1|, 2|s_1|, ..., (l-1)|s_1|\}$ is not in $S$. Let $t|s_1|$ be that value. Since we know $|s_1|\in S$, it remains to check two possibilities:
			\begin{enumerate}
				\item Case: $2 \leq t \leq l-2$: In this case
				$$
					S = \{s_1\} \cup \{|s_1|, 2|s_1|,...,(t-1)|s_1|\} \cup \{(t+1)|s_1|,(t+2)|s_1|,..., n|s_1|\}
				$$ 
				This is also not a signature for $G_n$, since the vertex labeled $(n-1)|s_1|$ is adjacent to every other vertex except for $(n-1-t)|s_1|$. This contradicts that every vertex in $G_n$ has degree $n-3$. 
				\item Case: $t=l-1$.
				In this case there exists a maximal arithmetic progression of length exactly $l-2$, namely $\{|s_1|,...,(l-2)|s_1|\}$. By definition of $l$, there also exists a maximal arithmetic progression of length $l$ with common difference $|s_1|$. By Observation~\ref{obs:deg_of_0}, $0\notin S$, and so $s_1$ is not in any arithmetic progression with common difference $|s_1|$. 
				The first term of the arithmetic progression of length $l$ is greater than $s_1$, and its last term is greater than $(l-2)|s_1|$. 
				Note that the two arithmetic progressions are disjoint. If they have a common term, then all terms following the common term are the same since both arithmetic progressions have common difference $|s_1|$. Then $\{|s_1|,...,(l-2)|s_1|\}$ can be extended to the last term of the $l$-term arithmetic progression, and would not be a maximal arithmetic progression. Thus, we must have that $l + l-2 = n-1 \Rightarrow l = \frac{n+1}{2}$, contradicting that $n$ is even. 
			\end{enumerate}

\item Case: $m=0$: 
Let $s$ denote the smallest signature value in $S$. By Observation~\ref{obs:smallest_arithmetic}, $S$ can be partitioned into at most 3 maximal arithmetic progressions of common difference $s$, one of which begins with $s$. $S$ cannot consist of 1 arithmetic progression with common difference $s$ since $[1]_{s}^{n}$ is a signature for the complete graph $K_n$. 
\begin{enumerate}
\item Case: 2 arithmetic progressions. Let the two maximal progressions be $[1]_s^{l_0}, [a_1]_s^{l_1}$.  
$$S=\{s,2s,...,l_0s\}\cup \{a_1, a_1+s,...,a_1+(l_1-1)s\}$$
Consider the difference set of $s$, $D(s)$.
$$D(s)=\{a_1-s,a_1,a_1+s,...,a_1+(l_1-2)s\}\cup \{s,2s,...,(l_0-1)s\}$$
Since $[a_1]_s^{l_1}$ is a maximal arithmetic progression, $a_1-s$ cannot be in the signature. Note that all terms in $D(s)$ besides $a_1-s$ are in the signature, and $\deg(s)= n-2$. This contradicts every vertex of $G_n$ has degree $n-3$. \\

\item Case: 3 arithmetic progressions.
 Let the three progressions be $[1]_s^{l_0}$, $[a_1]_s^{l_1}$, and $[a_2]_s^{l_2}$, where we assume without loss of generality that $s < a_1 < a_2$. Note that $s$ is not adjacent to either $a_1$ or $a_2$ since $[a_1]_s^{l_1}$ and $[a_2]_s^{l_2}$ are maximal progressions. By Observation~\ref{obs:missing_from_[0]}, at most 2 members are not present in the set $\{s, 2s, ..., (l-1)s\}$. \\

We will first prove that $a_1>(l_0+1)s$. Since $a_1$ is the first term of an arithmetic progression, $a_1\neq (l_0+1)s$. Assume by contradiction that $a_1<(l_0+1)s$. Then, $ps<a_1<(p+1)s$, where $1 \leq p\leq l_0s$. Note that the differences of $a_1$ with all elements in $[1]_s^{l_0}$ are less than $l_0s$. Since $a_1$ is not a multiple of $s$, the differences of $a_1$ with all elements in $[1]_s^{l_0}$ are not multiples of $s$ and not in $S$. That $a_1$ is not adjacent to any vertex in $[1]_s^{l_0}$ contradicts that $\deg(a_1)=n-3$ unless $l_0=2$. However, if $l_0=2$ and $a_1<(l_0+1)s$, then $a_1$ is not adjacent to $s$ and $2s$. Since $a_1$ is adjacent to every element of $[a_1]_s^{l_1}$, $l_1=2$. Then $l_2\geq 4$. Since $a_1$ is adjacent to every element of $[a_2]_s^{l_2}$, $S$ contains the arithmetic progression of length $l_2\geq 4$: $a_2-a_1, a_2+s-a_1,...,a_2+(l_2-1)s-a_1$, contradicting that $l_0=l_1=2$. Therefore, $a_1>(l_0+1)s$. By similar arguments, $a_2>a_1+l_1s$. \\

Consider the $i$th term of the arithmetic progression $[1]^{l_0}_s$ $is$ for $1 \leq i \leq \min \{l_0,l_1,l_2 \}$. Since $a_1-s$ and $a_2-s$ are not in the signature and $\deg(is)=n-3$, $is$ is adjacent to every vertex other than $a_1 + (i-1)s$ and $a_2 + (i-1)s$. In particular, if $2s \in S$, then $2s$ is adjacent to $a_1$ and $a_2$, and thus $|a_1-2s|$ and $|a_2-2s|$ are in $S$. 
\begin{enumerate}
\item Case: $2s\in S$. Since $a_1>(l_0+1)s$, $|a_1-2s|=a_1-2s$. 
Since $a_1-2s<a_1$ and $|a_1-2s|\in S$, then $a_1-2s\in [1]_s^{l_0}$. Let $a_1-2s=ks$, where $1\leq k\leq l_0$. Since $a_1$ is the first term of an arithmetic progression, $a_1=(k+2)s>(l_0+1)s\Rightarrow k>l_0-1$. Thus, $k=l_0$ and $a_1=(l_0+2)s$. The arithmetic progression $[a_1]_s^{l_1}$ can be written as $\{(l_0+2)s, (l_0+3)s,..., (l_0+l_1+1)s\}$.\\

By a similar reasoning, $|a_2-2s|=a_2-2s<a_2$ is in $S$ and thus, is a multiple of $s$. Let $a_2-2s=js$, where $1\leq j\leq l_0$ or $l_0+2\leq j\leq l_0+l_1+1$. Note that $a_2=(j+2)s>(l_1+l_0+2)s\Rightarrow j>l_0+l_1$. Thus, $j=l_1+l_0+1$ and $a_2=(l_0+l_1+3)s$. Therefore, 
\nonumber
\begin{align*}
S &=\{s, 2s, ..., l_0s\} \\
	&\qquad \cup \{(l_0+2)s,(l_0+3)s,\ldots, (l_0+l_1+1)s\} \\
	&\qquad \cup \{(l_0+l_1+3)s,(l_0+l_1+4)s,...,(l_0+l_1+l_2+2)s \}
\end{align*}
Note that $S$ is an arithmetic progression with two deletions. Lemma \ref{lemma:K_n-delete-2} shows that this is not a signature for $G_n$.

\item Case: $2s \notin S$. The three arithmetic progressions that partition $S$ are:
    $$
		S = \{s\} \cup [a_1]_s^{l_1} \cup [a_2]_s^{l_2}
	$$
Without loss of generality, we assume that $l_1>l_2$, and so $l_1\geq 4$ and $a_1+2s, a_1+3s\in S$. Since $2s, a_1-s\notin S$, $a_1$ is not adjacent to $s$ and $a_1+2s$. Then, $a_1$ must be adjacent to $a_1+3s$, and so $3s\in S$. Thus,
	$$
		S = \{s\} \cup \{3s,4s,\ldots,(l_1 + 2)s \} \cup [a_2]_s^{l_2}
	$$
	We know $3s$ is not adjacent to $s$ and $5s$, and so $3s$ must be adjacent to $a_2$. By assumption, $a_2>3s$. 
	We suppose $a_2-3s=qs$, where $3\leq q\leq l_1+2$ or $q=1$. Since $a_2$ is the first term of an arithmetic progression, $a_2=(q+3)s>(l_1+3)s\Rightarrow q>l_1$. This implies that $q$ can only be $l_1+1,l_1+2$. If $q=l_1+1$, then $S$ is an arithmetic progression with two deletions of $2s$ and $(l_1+3)s$. By Lemma \ref{lemma:K_n-delete-2}, this is not a signature for $G_n$. Thus, $q=l_1+2\Rightarrow a_2=(l_1+5)s$. 
	\nonumber
	\begin{align}
			S &= \{s\} \cup \{3s,4s,\ldots,(l_1 + 2)s \} \\
			&\qquad \cup \{ (l_1 + 5)s,(l_1 + 6)s, \ldots ,(n+3)s\} \\
			&= [1]_s^{n+3} \setminus \{2s, (l_1+3)s, (l_1+4)s \}
	\end{align}
	But this can never be a signature for $G_n$. If $1 \leq l_1 \leq n-4$, then $l_1+7 \leq n+3$, and thus $(l_1+7)s, (l_1+6)s$ are both in  $S$. Then, the vertex labeled $3s$ will not be adjacent to $s, (l_1+6)s$, and $(l_1+7)s$, contradicting that $\deg(3s)=n-3$. On the other hand, if $n-3 \leq l_1 \leq n-2$, then $l_1+7>n+3$ and the vertex labeled $(l_1+5)s$ is only not adjacent to $s$, contradicting that $\deg((l_1+5)s) = n-3$.
\end{enumerate}
\end{enumerate}
\end{enumerate}
We have exhausted all possible signatures for $G_n$, without finding one. This completes the proof.
\end{proof}

\section{Conclusion}
\label{sec:conclusion}

The study of autographs offers some insight into what makes certain graphs difficult or impossible to express in this particular abbreviated form. The demonstration of these specific non-autographs confirms the intuition of~\citep{bloom1984acg} that graphs whose complements have low degree are not likely to be autographs. 

\subsection{Discussion}

At first glance, one might be tempted to hope that autographs might provide a mechanism for determining if two graphs are isomorphic, since it is easy to verify that two signatures are identical. However, such a mechanism could provide a solution to the Graph Isomorphism Problem, which, while not known to be NP-complete, has no known polynomial time solution~\citep{garey2002computers}.  Thus, it is unlikely that a polynomial time algorithm for the \sig problem can be found. Indeed, the question of whether possible signature values for a graph could be bounded was raised by~\cite{bloom1979can} and addressed for certain cases by~\cite{bloom1984acg}. Thus, algorithmic searches for signatures for graphs are compromised by the facts that: i) signatures are not unique; ii) bounds for possible signature values are not known; and iii) there is no polynomial time solution for determining whether a candidate signature realizes a target graph. Nevertheless, Graph Isomorphism can be solved in polynomial time for graphs of bounded degree~\citep{luks1982isomorphism}, so there is some hope that algorithmic approaches could be viable in finding signatures for some autographs. 

\subsection{Open Problems}

Many interesting problems concerning autographs remain. Namely:

\begin{enumerate}
	\item Is the signature problem GI-complete? Such a result would clarify the computational complexity of this line of inquiry.
	\item Is it possible to derive bounds for signature values in autographs that would enable polynomial time algorithmic searches for signatures? 
	\item Are there other interesting families of non-autographs?
	\item Is there a property that determines whether a graph is an autograph?
\end{enumerate}

\subsection{Acknowledgments}


The first author is deeply indebted to Gary Bloom for introducing him to this problem and providing expert guidance towards its resolution. Prof. Bloom died unexpectedly in September of 2009 while on a hiking trip, but was reportedly in possession of an earlier version of this manuscript at the time. We dedicate this paper to his memory. 

We would also like to thank Joseph Gallian, Pavol Hell, and Valia Mitsou for helpful comments.


\bibliography{references}
\bibliographystyle{plain}

\end{document}